\newtheorem{thm}{Theorem}[section]
\newtheorem{lemma}[thm]{Lemma}
\newtheorem{example}{Example}
\newtheorem{rmk}[thm]{Remark}
\newtheorem{defn}[thm]{Definition}
\newtheorem{prop}[thm]{Proposition}
\numberwithin{equation}{section}
\begin{document}
\title{Genus bounds for min-max minimal surfaces}
\author{Daniel Ketover}\address{Imperial College London\\Huxley Building, 180 Queen's Gate, London SW7 2RH}
\thanks{The author was partially supported by NSF-PRF DMS-1401996 as well as by ERC-2011-StG-278940.}
 \email{d.ketover@imperial.ac.uk}
\maketitle
\begin{abstract}
We prove optimal genus bounds for minimal surfaces arising from the min-max construction of Simon-Smith.  This confirms a conjecture made by Pitts-Rubinstein in 1986.
\end{abstract}

\section{Introduction}
A Heegaard surface in a closed oriented $3$-manifold $M$ is a closed orientable surface $\Sigma$ such that $M\setminus\Sigma$ consists of two open genus $g$ handlebodies.  It is a fundamental fact that every $3$-manifold admits a Heegaard surface of some genus.  Given such a Heegaard surface $\Sigma$ in a $3$-manifold endowed with a Riemannian metric, a natural question is whether one can find a minimal surface in the isotopy class of $\Sigma$.  Certainly it would be fruitless to try to minimize area because one can always collapse $\Sigma$ into either of the two 1-dimensional spines of the handlebodies determined by the Heegaard splitting so that its area approaches zero.

Alternatively, one can try to find not a minimizing but an index $1$ saddle-point type critical point for the area functional in the isotopy class of $\Sigma$ using a min-max argument.    The min-max argument was introduced by Birkhoff \cite{B} in 1917 to find a closed geodesic on the two-sphere endowed with an arbitrary metric.   For surfaces, the method was pioneered by Almgren-Pitts (\cite{a},\cite{p}) who used it to produce embedded minimal surfaces in arbitrary $3$-manifolds.  

We will consider the min-max process of Simon-Smith \cite{ss}, that refined the work of Almgren-Pitts.  Thus we consider sweepouts of a $3$-manifold by surfaces isotopic to a Heegaard surface $\Sigma$ except where they degenerate to $1$-d graphs in the spines of both handlebodies.  Among such Heegaard sweepouts (or heuristically,``cycles" in the space of surfaces), by analogy with finite dimensional Morse theory, the largest slice of a sweepout with smallest possible maximal area should be a critical point for the area functional, i.e.\ potentially a smooth minimal surface.  

More precisely, given a Heegaard splitting $H$ of $M$, a \emph{sweepout by Heegaard surfaces} or \emph{sweepout} is a one parameter family of closed sets $\left\{\Sigma_t\right\}_{t\in [0,1]}$ continuous in the Hausdorff topology such that 
\begin{enumerate}
\setlength{\itemsep}{1pt}
  \setlength{\parskip}{0pt}
  \setlength{\parsep}{0pt}

\item $\Sigma_t$ is an embedded smooth surface isotopic to $H$ for $t\in(0,1)$
\item $\Sigma_t$ varies smoothly for $t\in (0,1)$
\item $\Sigma_{0}$ and $\Sigma_1$ are $1$-d graphs, each one a spine of one of the handlebodies determined by the splitting surface $H$.
\end{enumerate}

If $\Lambda$ is a collection of sweepouts, we say that the set $\Lambda$ is \emph{saturated} if given a map $\phi\in C^{\infty}(I\times M,M)$ such that $\phi(t,-)\in\text{Diff}_{0} M$ for all $t\in I$, and a family  $\left\{\Sigma_t\right\}_{t\in I}\in\Lambda$, we have  $\left\{\phi(t,\Sigma_t)	\right\}_{t\in I}\in\Lambda$.  Given a Heegaard splitting $H$, denote by $F_H$ a sweepout of $M$ by Heegaard surfaces that is also a foliation of $M\setminus\{\Sigma_0,\Sigma_1\}$.  Denote by $\Lambda_H$ the smallest saturated family of sweepouts containing $F_H$.

The width associated to $\Lambda_H$ is defined to be
\begin{equation}\label{w}
W(M,\Lambda_H)=\inf_{\left\{\Sigma_t\right\}\in\Lambda}\sup_{t\in I} \mathcal{H}^2(\Sigma_t),
\end{equation}
where $\mathcal{H}^2$ denotes $2$-dimensional Hausdorff measure.  It follows by an easy argument using the isoperimetric inequality (Proposition 1.4 in \cite{cd}) that $W_H>0$.  This expresses the non-triviality of the sweepout.  A \emph{minimizing sequence} is a sequence of families $\left\{\Sigma^n_t\right\}\in\Lambda_H$ such that
\begin{equation} 
\lim_{n\rightarrow\infty}\sup_{t\in[0,1]} \mathcal{H}^2(\Sigma^n_t)=W(M,\Lambda_H).
\end{equation}
\
A \emph{min-max sequence} is then a sequence of slices $\Sigma^n_{t_n}$, $t_n\in (0,1)$ such that
\begin{equation} 
\mathcal{H}^2(\Sigma^n_{t_n})\rightarrow W(M,\Lambda_H).
\end{equation}

The main result due to Simon-Smith is that some min-max sequence converges to a smooth minimal surface realizing the width, whose genus is controlled:

\begin{thm}[Simon-Smith Min-Max Theorem (1982) \cite{cd},\cite{dp},\cite{ss}]\label{SimonSmith}
Let $M$ be a closed oriented Riemannian $3$-manifold admitting a Heegaard surface $H$ of genus $g$.  Then 
some min-max sequence $\Sigma_{t_i}^i$ of surfaces isotopic to $H$ converges as varifolds to $\sum_{j=1}^k n_j \Gamma_j$, where $\Gamma_j$ are smooth embedded pairwise disjoint minimal surfaces and where $n_j$ are positive integers.  Moreoever, \begin{equation}
W(M, \Lambda_H)=\sum_{j=1}^k n_j\mathcal{H}^2(\Gamma_j).
\end{equation} The genus of the limiting minimal surface can be controlled as follows:
\begin{equation}\label{gb}
\sum_{i\in O} g(\Gamma_i) + \frac{1}{2}\sum_{i\in N} (g(\Gamma_i)-1)\leq g,
\end{equation}
where $O$ denotes the set of $i$ such that $\Gamma_i$ is orientable, and $N$ the set of $i$ such that $\Gamma_i$ is non-orientable, and $g(\Gamma)$ denotes the genus of $\Gamma$.  The genus of a non-orientable surface is the number of cross-caps one must attach to a two-sphere to obtain a homeomorphic surface.
\end{thm}

Simon-Smith proved Theorem \ref{SimonSmith} in the case when the ambient manifold $M$ is diffeomorphic to $\mathbb{S}^3$, and $g=0$. Thus Simon-Smith showed that the three-sphere endowed with arbitrary metric always contains an embedded minimal two-sphere.   The arguments of Simon-Smith easily generalize to the setting of genus $g$ sweepouts of arbitrary three-manifolds.  The exposition of Colding-De Lellis \cite{cd} and De Lellis-Pellandini \cite{dp} together give a complete account of the Simon-Smith Min-Max Theorem \ref{SimonSmith} in its full generality.

The work of Simon-Smith was based on refining the seminal work of Almgren-Pitts (\cite{a},\cite{p}), who proved the existence of min-max minimal surfaces using more general sweepouts that do not vary as smoothly as those considered here.

Theorem \ref{SimonSmith} is the only known method that works with no assumptions on the ambient metric to produce embedded minimal surfaces.  Alternatively, starting with a Heegaard foliation one can try to run the mean curvature flow on each slice, and hope that some slice will converge to a minimal surface in the limit as time approaches infinity.  Such a result was proved for sweepouts of the two-sphere by circles by Grayson \cite{g} but the mean curvature flow at present is far from being able to achieve this for surfaces sweeping out a three-manifold.

In some ways however, Theorem \ref{SimonSmith} is unsatisfactory.  The genus bound \eqref{gb} is not optimal and does not give any qualitative information about how the min-max limit is achieved topologically from its approximating min-max sequence.  

To see what optimal genus bound one can expect, it is instructive to consider the work of Meeks-Simon-Yau (Theorem 1 in \cite{msy}).  The authors considered a minimizing sequence for area in a non-trivial isotopy class of some surface $\Sigma$ embedded in a three-manifold.   They proved that a minimizing sequence converges to $\sum_{j=1}^k n_j\Gamma_j$ where $\Gamma_j$ are smooth pairwise disjoint minimal surfaces and $n_j$ are positive integers.   They further showed that after performing finitely many neckpinch surgeries on the minimizing sequence and potentially discarding several small components, the remaining components of the minimizing sequence align themselves as covers with the expected multiplicities about the set of limiting $\Gamma_j$.    (Recall that we say a surface $\tilde{\Sigma}$ arises from $\Sigma$ via a \emph{neck-pinch surgery} if $\Sigma\setminus\tilde{\Sigma}$ is an annulus, $\tilde{\Sigma}\setminus\Sigma$ consists of two disks, and the symmetric difference $\tilde{\Sigma}\Delta\Sigma$ is a sphere bounding a ball.)

Pitts-Rubinstein claimed in 1986 \cite{pr} that the same statement about the way in which minimizing sequences can degenerate should apply in the min-max setting.  In this paper we confirm their conjecture.

Let us first introduce some notation. Given a closed embedded surface $\Gamma\subset M$, denote by $T_\epsilon(\Gamma)$ the $\epsilon$-tubular neighborhood about $\Gamma$. For a component $\Gamma_j$ of the min-max limit in Theorem \ref{SimonSmith} occurring with multiplicity $n_j$ let us make the following definitions: if $n_j$ is even, set
\begin{equation}
S^\epsilon_{n_j}(\Gamma_j):=\bigcup_{k=1}^{n_j/2}\partial T_{\epsilon/k}(\Gamma_j),
\end{equation} 
and if $n_j$ is odd, 
\begin{equation}
S^\epsilon_{n_j}(\Gamma_j):=\Gamma_j\cup\bigcup_{k=1}^{(n_j-1)/2}\partial T_{\epsilon/k}(\Gamma_j).
\end{equation}

Thus for orientable $\Gamma_j$, $S^\epsilon_{n_j}(\Gamma_j)$ consists of $n_j$ copies of $\Gamma_j$ stacked about it, while if $\Gamma_j$ is non-orientable and $n_j$ is even, then $S^\epsilon_{n_j}(\Gamma_j)$ consists of $n_j/2$ orientable surfaces of genus $g(\Gamma_j)-1$, each a double cover of $\Gamma_j$ via nearest point projection.

We can now state the main result of this paper:

\begin{thm} [Convergence Theorem for Min-Max Sequences]\label{main}
Let $H$ be a genus $g$ Heegaard surface in a Riemannian $3$-manifold.  By Theorem \ref{SimonSmith}, some choice of min-max sequence $\Sigma_j$ isotopic to $H$ converges as varifolds to $\sum_{i=1}^k n_i\Gamma_i$, where $\Gamma_i$ are smooth closed pairwise disjoint embedded minimal surfaces and $n_i$ are positive integers.  There exists $\epsilon_1>0$ so that for any $0<\epsilon<\epsilon_1$, after passing to a subsequence, for $j$ large enough, by performing finitely many neck-pinch surgeries on $\Sigma_j$ within $T_\epsilon(\cup_{i=1}^k\Gamma_i)$ and discarding some connected components, one obtains a surface $\tilde{\Sigma}_j$ such that
\begin{enumerate}
\setlength{\itemsep}{1pt}
  \setlength{\parskip}{0pt}
  \setlength{\parsep}{0pt}

\item $\tilde{\Sigma}_{j}\subset T_{\epsilon}(\cup_{i=1}^k \Gamma_i)$ 
\item $\tilde{\Sigma}_j$ is isotopic to $\bigcup_{i=1}^k S^\epsilon_{n_i}(\Gamma_i)$.
\end{enumerate}
Moreover, one has the following ``genus bound with multiplicity:"
\begin{equation}\label{bettergenus}
\sum_{i\in O} n_i g(\Gamma_i) + \frac{1}{2}\sum_{i\in N} n_i (g(\Gamma_i)-1)\leq g.
\end{equation}
\end{thm}

\begin{rmk}
The conclusion of Theorem \ref{main} holds as long as the min-max sequence is $1/j$-almost minimizing. See Section \ref{replacement2} for the definition of this property.  
\end{rmk}
\begin{rmk}
When $\Gamma_i$ is non-orientable, $n_i$ is necessarily even since surgery on an orientable surface cannot give rise to a non-orientable connected component.
\end{rmk}
\begin{rmk}
In some situations using the fact that the min-max sequence arises from a Heegaard splitting, the surgery process proved in Theorem \ref{main} implies slightly sharper genus bounds as discussed by Pitts-Rubinstein \cite{pr}.  However, \eqref{bettergenus} is the most one can expect for sequences that are merely almost minimizing (see the discussion in Section 10.2 in \cite{dp}).
\end{rmk}
Note that \eqref{bettergenus} is stronger than \eqref{gb} since it has the multiplicities $n_i$ on the left hand side.  The genus bound \eqref{bettergenus} follows directly from the statement on degenerations  because surgery can only decrease the genus.  Theorem \ref{main} also applies in settings slightly more general than sweepouts arising from Heegaard splittings.  One can consider saturations from the sweepouts consisting of level sets of a Morse function as in Definition 0.5 of \cite{dp}.

The usefulness of Theorem \ref{main} is that it permits one to use min-max theory in topological arguments and therefore is a bridge between the geometric min-max process and three-manifold topology.  Already Theorem \ref{main} has found other applications in topology.  In \cite{CGK} it is used in our resolution of the classification problem for non-Haken hyperbolic $3$-manifolds.   

Pitts-Rubinstein also sketched an argument (Theorem 1.8 in \cite{r}) claiming that if one began with a \emph{strongly irreducible} Heegaard splitting $H$ (as introduced in \cite{CG}), using an iterated min-max process one could prevent any degeneration and produce a minimal surface isotopic to $H$ or else a stable non-orientable minimal surface with multiplicity $2$ double covered by $H$ after performing a single neck-pinch on $H$.  Their sketch uses the results of this paper as well as a Morse index bound. 

Note that Theorem \ref{main} differs slightly from the degeneration for minimizing sequences proved by Meeks-Simon-Yau in that we do not quantify the size of the neck-pinches (which in \cite{msy} are referred to as $\gamma$-reductions).   The arguments of this paper could likely be extended to achieve this but it does not seem to be needed in most applications.

While Theorem \ref{main} tells us \emph{how} min-max sequences may degenerate, the main challenge is to determine \emph{when} and \emph{whether} one can prevent the min-max limit from degenerating at all.  In fact the key step in Marques-Neves' \cite{mn} proof of the Willmore conjecture was to control the genus of the min-max limit associated to a complicated five parameter family of tori in round $\mathbb{S}^3$.  They used a degree argument that is intimately connected with the geometry of $\mathbb{S}^3$.  Together with the assumption of additional symmetry \cite{K}, the Catenoid Estimate \cite{KMN}, and Lusternick-Schnirelman theory, the optimal genus bounds do allow us to control the genus of the limiting minimal surface in many situations.  For instance, in \cite{KMN} we showed that if a three-manifold $M$ has positive Ricci curvature, then running a min-max procedure relative to a Heegaard surface realizing the Heegaard genus of $M$, no degeneration can occur and one obtains via Theorem \ref{main} a minimal surface isotopic to the Heegaard surface.  Recently Marques-Neves \cite{mn2} have also proved some Morse index bounds for min-max limits which should have applications to understanding the limiting geometries.

In brief, we prove Theorem \ref{main} by showing that if the degeneration were not as claimed, then there would have to be a ``folding curve" as in Example \ref{bad} (see below).  But min-max sequences are locally approximated by stable minimal surfaces and limits of stable surfaces do not exhibit such folding behavior by the curvature estimates of Schoen.  To reduce to the problem of ruling out folding, we will take stable replacements for our min-max sequence in a manner somewhat analagous to an iteration of Schwarz' alternating method \cite{Sc}.

The organization of the paper is as follows.  In Section \ref{genuscontrol} we describe previous efforts to control the topology of the limiting surfaces using Simon-Smith's lifting lemma and our improvement.  In Section \ref{basic} we collect relevant facts from the theory of minimal surfaces that we shall need.  In Section \ref{replacement2} we summarize the variational property that min-max sequences satisfy and how it is used to replace them locally with stable minimal surfaces.  In Section \ref{strategy} we give an explanation of the idea for improving the lifting lemma and the rest of Section \ref{improvedlifting2} is devoted to proving this new lifting lemma.  In Section \ref{last} we show how the lifting lemma implies Theorem \ref{main} using an argument of Frohman-Hass \cite{fh}.  
\\
\\
\noindent
{\em Acknowledgements:  I am grateful to Camillo De Lellis for several discussions and for his hospitality in inviting me to Zurich where some of this work was carried out.  I thank my advisor Toby Colding for suggesting this problem and his support.}

\section{Genus Bounds and Lifting Lemma}\label{genuscontrol}
Let us first consider the situation addressed by Simon-Smith \cite{ss}.  One has therefore a min-max sequence consisting of two-spheres, and one wants to show that the varifold limit is a union of two-spheres (possibly with multiplicity).  The difficulty is that it is possible, for instance, for a sequence of two-spheres to converge as varifolds to a torus with multiplicity two:

\begin{example}\label{bad}
Let $\mathbb{T}^3=\mathbb{R}^3/\mathbb{Z}^3$.  Define the two-torus
\begin{equation}
T:=[0,1]\times[0,1]\times\{1/2\}\subset\mathbb{T}^3.
\end{equation}
For each small $\epsilon$ consider the set 
\begin{equation}
R_\epsilon:=[\epsilon,1-\epsilon]\times[\epsilon,1-\epsilon]\times[1/2-\epsilon,1/2+\epsilon]\subset\mathbb{T}^3.
\end{equation}
For each $\epsilon$, $\partial R_\epsilon$ is a closed two-sphere embedded in $\mathbb{T}^3$.  On other hand, $\partial R_\epsilon$ converges as varifolds to $2T$ as $\epsilon\rightarrow 0$.  Of course $\partial R_\epsilon$ are not Heegaard surfaces, but one can easily make an analagous construction to find a sequence of two-spheres converging to a torus with any even multiplicity in $\mathbb{S}^3$ (see Figure 1 in \cite{dp}).
\end{example}

Thus one has to use more than the varifold convergence to rule Example \ref{bad} out.  The essential tool in achieving this and proving Theorem \ref{SimonSmith} is Simon-Smith's lifting lemma:
\begin{lemma} [Simon-Smith's Lifting Lemma \cite{ss,dp}]\label{SimonSmithLifting}
There exists a finite set $\mathcal{P}\subset\Gamma_i$ (where $\Gamma_i$ is a component of the min-max limit occurring with multiplicity $n_i$) so that if $\gamma$ is a simple closed curve on $\Gamma_i\setminus\mathcal{P}$ and for any $\epsilon >0$, when $j$ is large enough, there is a positive $n\leq n_i$ and a closed curve $\tilde{\gamma}$ on $\Sigma_j\cap T_{\epsilon}(\Gamma_i)$ which is homotopic to $n\gamma$ in $T_{\epsilon}(\Gamma_i)$. 
\end{lemma}

To see how Simon-Smith's lifting lemma allows one to exclude the convergence in Example 1, consider the closed curve $\gamma(t)_{t=0}^1 =\{1/2\}\times\{t\}\times\{1/2\}\subset T$ and the putative min-max sequence $\partial R_{1/j}\rightarrow 2T$. Then by the lifting lemma, $\gamma$ would have to ``lift" on $\partial R_{1/j}$ to a curve $\tilde{\gamma}$ homotopic to $\gamma$ or twice $\gamma$ in $T_\epsilon(T)$.  But since $\partial R_{1/j}$ is a two-sphere contained in $T_\epsilon(T)$, $\tilde{\gamma}$ is null-homotopic in $\partial R_{1/j}\cap T_\epsilon(T)$ and thus also in $T_\epsilon(T)$.

%\begin{figure}\label{fig}
%\begin{center}
%\includegraphics[height=1in,width=1in,angle=0]{lifting.jpg}
%\caption{A cross section of the convergence in Figure 1.  Any lift of a curve going around the inner curve onto the outer %$\Sigma_j$ is null-homotopic in a tubular neighborhood around $\Gamma$}
%\end{center}
%\end{figure}

To obtain the optimal genus bounds \eqref{bettergenus} one expects that the min-max sequence is arranging itself as coverings about $\Gamma_i$ and thus one needs to be able to replace $n\leq n_i$ in Lemma \ref{SimonSmithLifting} with $n=n_i$. We can now state the improved lifting lemma that achieves this. This is the key technical ingredient in proving Theorem \ref{main}.

First, let us fix notation.  For a closed embedded surface $\Sigma$ in a three-manifold, and $\epsilon$ small enough, let $p$ denote the nearest point projection map $p:T_\epsilon(\Sigma)\rightarrow\Sigma$.  If $\gamma$ is a simple curve contained on $\Sigma$, $p^{-1}(\gamma)$ is either an annulus or M\"obius band.  The latter can only occur when $\Gamma$ is non-orientable.

\begin{prop} [Improved Lifting Lemma with Multiplicity]\label{improvedlifting}

 Let $\Sigma_j$ be a $1/j$-almost minimizing min-max sequence of smooth surfaces arising from a Heegaard splitting (thus by Theorem \ref{SimonSmith}, $\Sigma_j\rightarrow \sum_{i=1}^s n_i\Gamma_i$, where $\Gamma_i$ are smooth embedded pairwise disjoint minimal surfaces and $n_i$ are positive integers).    Let $\left\{\gamma_i\right\}_{i=1}^k$ be a collection of simple closed curves contained in $\Gamma:=\bigcup_{i=1}^s\Gamma_i$.   Assume there is a point $p_i\in\Gamma_i$ in any connected component of $\Gamma$ containing one of the $\gamma_j$ so that $\gamma_r\cap\gamma_s=p$ for all $r\neq s$ with $\gamma_r,\gamma_s\subset\Gamma_i$. Then there exists $\epsilon_0 >0$, so that for any $\epsilon<\epsilon_0$,  there exist curves $\left\{\tilde{\gamma}_i\right\}_{i=1}^k\subset\Gamma$, as well as a subsequence of $\Sigma_j$ (still labeled $\Sigma_j$), and surfaces $\tilde{\Sigma}_j$ obtained from $\Sigma_{j}$ by finitely many neck-pinch surgeries, such that 
\begin{enumerate}
\setlength{\itemsep}{1pt}
  \setlength{\parskip}{0pt}
  \setlength{\parsep}{0pt}

\item Each $\tilde{\gamma}_i$ is homotopic to $\gamma_i$ in $\Gamma$ and $\tilde{\gamma}_i\subset T_{\epsilon}(\gamma_i)$
\item $\tilde{\Sigma}_{j}\rightarrow\sum_{i=1}^s n_i\Gamma_i$ as varifolds
\item For each $i\in\left\{1,2,..k\right\}$, if $\tilde{\gamma}_i\subset\Gamma_l$ then either $p^{-1}(\tilde{\gamma}_i)\cap T_\epsilon(\Gamma_l)\cap\tilde{\Sigma}_j$ is a union of $n_l$ closed curves, each of which projects via $p$ onto $\tilde{\gamma}_i$ with degree one or else $p^{-1}(\tilde{\gamma}_i)\cap T_\epsilon(\Gamma_l)\cap\tilde{\Sigma}_j$ is a union of $n_l/2$ closed curves, each of which projects via $p$ onto $\tilde{\gamma}_i$ with degree two.  The latter option can occur only when $\Gamma_l$ is non-orientable and $p^{-1}(\tilde{\gamma}_i)$ is a M\"obius band.
\end{enumerate}
\end{prop}
%\begin{figure}[htb]
%\begin{center}
%\includegraphics[height=1.5in,width=3in,angle=0]{mainprop.jpg}
%\caption{Illustration of the Improved Lifting Lemma.  Here the innermost torus is the limit with multiplicity two of two tori %surrounding it.  The inner curve on the limit "lifts" to two curves above it on the approximating surfaces}
%\end{center}
%\end{figure}
\begin{rmk}
The condition that all curves intersect in a point is a technical condition that can certainly be removed.   
\end{rmk}
%Proposition \ref{improvedlifting} is stronger than the original Simon-Smith Lifting Lemma \ref{SimonSmithLifting} %\cite{ss} because it allows us to lift a curve with the precise multiplicity one expects. In Proposition \ref{improvedlifting} %the lifted curves are found on a surface that has been surgered from $\Sigma_j$, not on $\Sigma_j$ itself, as in the %original lifting lemma  

Our proof of Proposition \ref{improvedlifting}  is logically independent of Simon-Smith's original work, though we do use an auxilliary result on boundary regularity that was proved as Lemma 8.1 in De Lellis-Pellandini \cite{dp}.

Given Proposition \ref{improvedlifting} one can use an argument of Frohman-Hass (Theorem 2.4 in \cite{fh}) to deduce our main result Theorem \ref{main}.  We carry this out in Section \ref{last}.

\noindent
.
%\begin{rmk}
%Pitts-Rubinstein conjectured a  slightly stronger genus bound than \ref{bettergenus}.   Their bound however does not %seem to hold for general sequences that are merely $1/j$-a.m. in annuli (see the discussion in \cite{dp}).   
%\end{rmk}

\section{Regularity Theory}\label{regularity}
\subsection{Preliminary lemmas and notation}\label{basic}
We will use repeatedly the following estimate due to Schoen \cite{schoen} (cf.\ \cite{W}, \cite{cmestimates}).
\begin{lemma} [Schoen's Curvature Estimates for Stable Surfaces \cite{schoen}\cite{cmestimates}]
Let $U$ be an open set in a 3-manifold, and $\Sigma_i$ a sequence of stable minimal surfaces in $U$ with $\partial\Sigma_i\subset\partial U$.  Then a subsequence of $\Sigma_i$ converges to a stable minimal surface $\Sigma$ smoothly on compact subsets of $U$.  Also for any stable surface $\Gamma\subset U$ with $\partial\Gamma\subset\partial U$, there holds for each $x\in\Gamma$,
\begin{equation}\label{schoen}
|A|^2(x)\leq C_U dist(x,\partial U)^{-2},
\end{equation}
where $C_U$ only depends on $U$ and not the stable surface $\Gamma$. 
\end{lemma}

\begin{rmk}\label{cinf}
The precise meaning of ``converges smoothly on compact sets" is that, for all compact $K\subset U$, for $j$ large enough we can express $K\cap\Sigma_j$ as a union of normal graphs of the form $\left\{exp_x(f_j(x)n(x)): x\in\Sigma\right\}$ where $$||f_j(x)||_{C^k(K)}\rightarrow 0$$ for all k.
\end{rmk}

We shall also need the monotonicity formula, stated here for smooth minimal surfaces.  Given $x\in M$ and $V$ a surface in $M$, define the ratio 
$$f(s) = \frac{\mathcal{H}^2(B_x(s)\cap V)}{\pi s^2}.$$
\begin{lemma} [Monotonicity Formula]\label{mono}
There exists a function $C(r)\geq 1$ (defined for $r$ small enough) such that if $V$ is a smooth minimal surface and $x\in M$,
\begin{equation}
 f(s)\leq C(r)f(t)\mbox{       whenever } 0\leq s\leq t\leq r.
\end{equation}
Moreover, $C(r)$ approaches $1$ as $r\rightarrow 0$.
\end{lemma}

Another useful topological lemma is from \cite{cm2}, which gives that for a sequence of surfaces with bounded genus, the genus can collapse into at most finitely many points:

\begin{lemma} [Genus Collapse, Colding-Minicozzi (Lemma I.0.14.\cite{cm2})]\label{genuscollapse}
Suppose  $\Sigma_j$ is a sequence of smooth surfaces a 3-manifold of genus $g$.  Then there exists finitely many points in the manifold $\left\{x_i\right\}_{i=1}^n$ and a subsequence of the surfaces, still denoted $\Sigma_j$, such that for all $x\notin\left\{x_i\right\}_{i=1}^n$, there is a radius $r(x)$ such that $\Sigma_j\cap B_x(r)$ is a union of planar domains for $r\leq r(x)$.  In particular, $g(\Sigma_j\cap B_x(r))=0$.
\end{lemma}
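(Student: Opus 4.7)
The plan is to localize the global bound $g(\Sigma_j)\leq g$ in order to identify a finite set of genus concentration points, outside of which the local genus vanishes for large $j$. For $x\in M$ and $r>0$ a regular value of $\mathrm{dist}(x,\cdot)|_{\Sigma_j}$ (so that $\partial B_x(r)$ meets $\Sigma_j$ transversally), let $G_j(x,r)$ denote the total genus of the compact surface with boundary $\Sigma_j\cap\overline{B_x(r)}$, defined as the sum over connected components of the genera of the closed surfaces obtained by capping each boundary circle with a disk. By Sard's theorem this is well-defined for a.e.\ $r$.

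The key additivity is the following: whenever $B_{x_1}(r_1),\dots,B_{x_k}(r_k)$ are pairwise disjoint with each $\partial B_{x_i}(r_i)$ transverse to $\Sigma_j$, one has
\[
\sum_{i=1}^{k}G_j(x_i,r_i)\;\leq\;g(\Sigma_j)\;\leq\;g.
\]
This is a direct Euler-characteristic computation: disjoint compact subsurfaces of a closed oriented surface of genus $g$ have total genus at most $g$, because each independent handle can lie in at most one of them.

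Now define the set of genus concentration points
\[
B:=\bigl\{x\in M:\text{for every regular }r>0,\ \limsup_{j\to\infty}G_j(x,r)\geq 1\bigr\}.
\]
I claim $|B|\leq g$. If $x_1,\dots,x_k\in B$ are distinct, choose small disjoint balls about them with transverse boundary; iterating the passage to subsequences (finitely many times) yields a single subsequence along which $G_j(x_i,r_i)\geq 1$ for every $i$, and the additivity bound forces $k\leq g$. Pass to this subsequence once and for all, and enumerate $B=\{x_1,\dots,x_n\}$ with $n\leq g$.

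Finally, for any $x\notin B$, by definition there exists a regular $r(x)>0$ with $\limsup_j G_j(x,r(x))=0$; since $G_j$ is a non-negative integer, in fact $G_j(x,r(x))=0$ for all sufficiently large $j$. Hence $\Sigma_j\cap B_x(r(x))$ is a disjoint union of planar domains, and since any subsurface of a planar surface is planar, the same conclusion holds for every $r\leq r(x)$. The only (minor) obstacle is bookkeeping around transversality of $\partial B_x(r)$ with $\Sigma_j$, handled uniformly by restricting to regular values of~$r$; the substantive content is the additivity of genus under disjoint embedded subsurfaces of a fixed closed surface.
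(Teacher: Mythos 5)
This lemma is cited verbatim from Colding--Minicozzi (Lemma I.0.14 of \cite{cm2}); the paper does not supply its own proof, so there is no in-text argument to compare against. Evaluating your proof on its own terms, the additivity step is fine (the Euler-characteristic bookkeeping, together with the observation that the adjacency graph of the pieces has $V-E$ at most the number of components of $\Sigma_j$, does give $\sum_i G_j(x_i,r_i)\le g(\Sigma_j)$ for disjoint transverse balls), and the endgame (``$G_j$ integer-valued, so $\limsup=0$ forces eventual vanishing; subsurfaces of planar surfaces are planar'') is correct.

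The gap is in the claim $|B|\le g$ and the phrase ``iterating the passage to subsequences.'' You define $B$ using $\limsup$ along the \emph{original} sequence, and then, given $x_1,\dots,x_k\in B$, you want a single index $j$ at which $G_j(x_i,r_i)\ge 1$ for every $i$ simultaneously. But $\limsup_j G_j(x_i,r_i)\ge 1$ only gives you, for each $i$ separately, an infinite set $J_i\subset\mathbb{N}$ of good indices; these $J_i$ need not intersect. Concretely, take $g=1$ and $\Sigma_j$ a torus whose handle is pinched near $p_1$ for odd $j$ and near $p_2$ for even $j$: then $B\supseteq\{p_1,p_2\}$, so $|B|>g$, and no subsequence makes both $G_j(p_1,r)$ and $G_j(p_2,r)$ nonzero at a common index. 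The extraction you describe -- first pick $J_1$, then hope $\limsup_{j\in J_1}G_j(x_2,r_2)\ge 1$, etc. -- fails at the second step, because membership in $B$ says nothing about $\limsup$ restricted to an arbitrary subsequence.

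The fix is to reverse the order of operations: pass to the subsequence \emph{before} defining $B$. For instance, fix a countable family of balls (rational radii, centers in a countable dense subset of the closed manifold $M$); for each such ball the sequence $G_j(\cdot)$ is a bounded nonnegative integer, so along a diagonal subsequence it is eventually constant for every ball in the family. Define $B$ with respect to this subsequence; now the ``simultaneous realization'' you want holds automatically (the genus indicators have stabilized), the additivity bound gives $|B|\le g$, and the remainder of your argument goes through unchanged.
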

\begin{rmk}
The genus of a surface with boundary is defined to be the genus of the closed surface obtained by capping off each boundary circle with a disk.
\end{rmk}
\subsection{Theory of Replacements}\label{replacement2}
Because our proof of Theorem \ref{main} makes heavy use of the regularity theory, we explain in this section how the regularity of the min-max limit is proved.   See Colding-De Lellis' survey \cite{cd} or Pitts' original manuscript \cite{p} for more details.  Roughly speaking the point of the replacement theory is that min-max sequences are locally well approximated by stable surfaces and one can use these stable surfaces to better understand the limit.

We begin with the definition of the main variational property that Almgren \cite{a} first discovered for min-max sequences:
\begin{defn}
 Given $\epsilon>0$ an open set $U\subset M^3$ and a surface $\Sigma$ we say that $\Sigma$ is $\epsilon$-almost minimizing in $U$ if there does not exist any isotopy $\phi$ supported in $U$ such that 
\begin{enumerate}
\item $ \mathcal{H}^2(\phi(t,\Sigma))\leq \mathcal{H}^2(\Sigma) + \frac{\epsilon}{8}$
\item  $\mathcal{H}^2(\phi(1,\Sigma))\leq \mathcal{H}^2(\Sigma) - \epsilon$
\end{enumerate}
\end{defn}
\noindent

Almgren was able to show \cite{a} that min-max sequences satisfy this property in small enough annuli:

\begin{thm}[\cite{cd}, \cite{dp}] \label{am}
Given a Heegaard splitting there exists a smooth min-max sequence $\Sigma_j$ of surfaces isotopic to $H$ and a function $r:M\rightarrow R$ such that in every annulus $An$ centered at x and with outer radius at most $r(x)$, $\Sigma_j$ is $1/j$-almost minimizing in $An$ provided $j$ is large enough.  Moreover, $\Sigma_j$ converges to a stationary varifold $V$ as $j\rightarrow\infty$.
\end{thm}

 We will call a sequence of surfaces \emph{$1/j$-almost minimizing} if it satisfies the conclusion of Theorem \ref{am}.  The necessity of working in small annuli rather than small balls is due to the example of the ``three-legged starfish" (cf.\ Figure 6 in \cite{cd} and the Introduction in \cite{p}). The almost-minimizing property of $\Sigma_j$ is the only variational property needed to prove the regularity of $V$: 

\begin{thm}[\cite{cd}, \cite{dp}] \label{am2}
Any sequence of genus $g$ surfaces that is $1/j$-almost minimizing converges to a smooth embedded minimal surface for which the genus bound \eqref{gb} holds.
\end{thm}

Thus Theorem \ref{am} and Theorem \ref{am2} together imply the Simon-Smith Min-Max Theorem \ref{SimonSmith}.

The task in this paper is thus to use the $1/j$-almost minimizing property to prove the stronger genus bounds \eqref{bettergenus}.  We will need first to understand how the almost minimizing property is used in proving the regularity of $V$ in Theorem \ref{am2}.  To that end, let us make the following definition (where $||V||$ denotes the mass of the varifold $V$):

\begin{defn}\label{replacement}
Given a stationary varifold $V$ in $M$ and $U$ an open subset of $M$, $V$ has \emph{a replacement in $U$} if there exists a stationary varifold $V'$ such that 
\begin{enumerate}
\item $V' = V$ in $M\setminus\overline{U}$
\item $||V'|| = ||V||$
\item $V'$ is smooth in $U$.
\end{enumerate}
\end{defn}

The $1/j$ almost minimizing property allows us to construct a replacement for $V$ in any domain $U\subset\bar{U}\subset B_x(r(x))\setminus \{x\}$ as follows.  We need yet another definition: 

\begin{defn}
Let $\mathcal{I}$ be a class of isotopies of $M$ and $\Sigma\subset M$ a smooth embedded surface.   If ${\phi^k}\in\mathcal{I}$ and $$\lim_{k\rightarrow\infty}\mathcal{H}^2(\phi^k(1,\Sigma)) = \inf_{\phi\in\mathcal{I}}\mathcal{H}(\phi(1,\Sigma)),$$
then we will say that $\phi^k(1,\Sigma)$ is a \emph{minimizing sequence for Problem($\Sigma,\mathcal{I}$)}.
\end{defn}
\noindent
Let $Is_j(\Sigma, U)$ denote all isotopies $\phi$ of $M$ supported in $U$ such that 
\begin{equation}
\mathcal{H}^2(\phi(t,\Sigma))\leq \mathcal{H}^2(\Sigma) + \frac{1}{8j} \mbox{ for all }0\leq t\leq 1.
\end{equation}
\noindent
Let us call the isotopies $Is_j(\Sigma, U)$ the \emph{$1/j$ isotopies for $\Sigma$ supported in $U$}.
For each $j$ we then take a minimizing sequence $\phi^l(1,\Sigma_j)$ for Problem($\Sigma_j$, $Is_j(\Sigma_j,U$)) so that
\begin{equation}\label{smooth}
\phi^l(1,\Sigma_j)\rightarrow V_j, \mbox{ where $V_j$ is a stable surface in $U$}.
\end{equation}

We will call $V_j$ a \emph{$1/j$-replacement} for $\Sigma_j$ in $U$.  By Lemma 7.4 of \cite{cd}, $V_j$ is smooth. By the $1/j$-almost minimizing property of $\Sigma_j$, we have 
\begin{equation}\label{almostminimizingprop}
\mathcal{H}^2(V_j)\geq \mathcal{H}^2(\Sigma_j) -1/j.
\end{equation}
\noindent
By Schoen's curvature estimates (\ref{schoen}), $V_j$ converge to a varifold $\hat{V}$ that coincides with a smooth stable minimal surface inside $U$.  By construction, $\hat{V}=V$ in $M\setminus\overline{U}$.  Also by \eqref{almostminimizingprop}, $\mathcal{H}^2(\hat{V})\geq \mathcal{H}^2(V)$.  But by construction, $$\mathcal{H}^2(\hat{V})\leq\lim_{j\rightarrow\infty} \mathcal{H}^2(\hat{V_j})\leq\lim_{j\rightarrow\infty} \mathcal{H}^2(\Sigma_j) = \mathcal{H}^2(V)$$\noindent Thus $||V||=||V'||$.  The varifold $\hat{V}$ is of course stationary (and stable) in $U$ but it turns out that it is stationary in all of $M$ (see Proposition 7.5 in \cite{cd}). Thus $\hat{V}$ is a replacement for $V$ in $U$ in the sense of Definition \ref{replacement}. 
\begin{rmk}\label{several}
If $U$ is a union of several disjoint domains $\{U_i\}_{i=1}^k$, and $\Sigma$ if a surface in a $3$-manifold intersecting each $U_i$, a solution to \\Problem$(U, Is_j(\Sigma, U))$ coincides in each $U_i$ with a solution to \\Problem$(U_i, Is_j(\Sigma, U_i))$.   This follows because one can concatenate the (disjointly supported) minimizing sequence of isotopies for each of the separate $U_i$ and far out in the minimizing sequence for each $U_i$, the final areas of $\Sigma$ are decreased. 
\end{rmk}
In the final step of the regularity theory, one shows that a varifold with ``enough replacements" is smooth (Proposition 6.3 of \cite{cd}), and we can thus conclude that $V$ is smooth.   Crucial for the following is that once we have proved the smoothness of $V$, it follows by unique continuation for minimal surfaces that $\hat{V}= V$, as long as $U$ is a small enough convex ball.

The upshot of this discussion is that although the convergence of $\Sigma_j$ to $V$ is very weak (i.e. in the sense of varifolds), the convergence of $V_j$ to $V$ is quite strong (i.e., smoothly on compact subsets):  locally, in a small ball $U$ centered in $V$, we can adjust the sequence $\Sigma_j$ to obtain better convergence.  We summarize this discussion in the following lemma that will be used in the proof of Theorem \ref{main}.

\begin{lemma}\label{replacementtheory}
Given a convex set $U\subset\bar{U}\subset B_x(r(x))\setminus\{x\}$, if $V_j$ is a $1/j$ replacement for $\Sigma_j$ in $U$, then $V_j\rightarrow V$, and the convergence is smooth in compact subsets of $U$.
\end{lemma}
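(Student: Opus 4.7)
The plan is to extract a smooth subsequential limit of the $V_j$ using Schoen's curvature estimates, identify that limit with $V$ via the replacement framework already developed, and then apply unique continuation to pass from equality outside $\overline{U}$ to equality inside $U$.

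First I would note that each $V_j$ is a stable smooth minimal surface in $U$ with $\partial V_j \subset \partial U$, so Schoen's estimate \eqref{schoen} provides uniform interior curvature bounds. Standard compactness yields a subsequence converging smoothly on compact subsets of $U$ to a smooth stable minimal surface $\hat V$. In $M\setminus\overline U$ the $V_j$ agree with $\Sigma_j$, so $\hat V = V$ there as varifolds. Combining the minimization inequality $\mathcal{H}^2(V_j)\leq \mathcal{H}^2(\Sigma_j)$ with the reverse bound \eqref{almostminimizingprop} supplied by the $1/j$-a.m.\ property gives $\|V_j\|(M)\to\|V\|(M)$. Hence $\hat V$ qualifies as a replacement for $V$ in the sense of Definition~\ref{replacement}.

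Since the smoothness of $V$ — equivalently $V = \Gamma = \sum_i n_i \Gamma_i$ — has already been established from the existence of replacements in small annuli via Proposition~6.3 of \cite{cd}, I would now have two smooth minimal surfaces $\hat V$ and $\Gamma$ in $M$ coinciding on the open set $M\setminus\overline U$. Here the convexity of $U$ becomes decisive: by the maximum principle, no closed minimal surface can be trapped inside a convex open set (a strictly convex function on $U$ has strictly positive Laplacian along any would-be closed component, which is impossible by Stokes), so every connected component of $\Gamma\cap U$ must extend through $\partial U$ into the exterior, where it shares an open subset with $\hat V$. Unique continuation for minimal surfaces then forces $\hat V = \Gamma$ throughout $U$. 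Because every subsequence of $V_j$ converges smoothly on compact subsets to this same limit, the full sequence $V_j$ converges smoothly on compact subsets of $U$ to $\Gamma$.

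The main obstacle is the unique continuation step, which requires that each component of $\Gamma\cap U$ be connected to the exterior where $\hat V=\Gamma$ is already known; without convexity of $U$ a closed component of $\hat V$ could in principle sit entirely inside $U$ with no seed for propagation, and the identification would fail. Convexity closes this gap cleanly via the maximum-principle argument above, making the remaining propagation through $\partial U$ a routine application of unique continuation for the minimal surface equation.
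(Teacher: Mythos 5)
Your proof follows exactly the paper's argument: Schoen's estimates give smooth subsequential convergence of the $V_j$ to a stable minimal $\hat V$ in $U$, the $1/j$-a.m.\ property together with the minimization inequality identifies $\hat V$ as a replacement for $V$, and unique continuation together with convexity of $U$ then forces $\hat V = V = \Gamma$; your maximum-principle elaboration of why convexity rules out closed components trapped inside $U$ is a correct expansion of the step the paper only asserts. The concluding subsequence-to-full-sequence remark is also standard and closes the argument correctly.
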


Returning to Example 1, we had a sequence of $\Sigma_j=\partial R_{1/j}$ of spheres converging to a torus with multiplicity $2$.  We want to show that such a sequence cannot be almost minimizing and thus cannot arise.   Indeed, if one considers a ball $U$ centered about $(0,1/2,1/2)\in\mathbb{T}^3$ one sees the sequence $\Sigma_j\cap U$ ``folding over"  along a line to converge to $T\cap U$ with multiplicity $2$.  By elementary calculations one can see that the  isotopy that pushes the surfaces to the boundary of $U$ clears out almost all of the area of $\Sigma_j$ without passing through large slices.  Therefore such a sequence cannot be almost minimizing and thus this type of convergence should not be permitted for min-max sequences in the Simon-Smith theory.   In general, even though $\Sigma_j$ are smooth surfaces, their convergence to their limit is so weak as to make it difficult to construct such an isotopy directly.  We will need instead to use the theory of replacements to pass to stable surfaces over which one has more control by Schoen's estimates.  After many reductions, the main step in our argument is still to rule out ``folding along a line," in what we call the No Folding Proposition \ref{nofolding}.

\section{Proof of Lifting Lemma: Proposition \ref{improvedlifting}}\label{improvedlifting2}
In the section we prove the Improved lifting lemma (Proposition  \ref{improvedlifting}) which is the main technical tool in the proof of Theorem \ref{main}.  Let us first explain the main ideas.
\subsection{Strategy}\label{strategy}
The original lifting lemma was proved by covering the closed curve $\gamma\subset\Gamma$ by small disjoint balls such that each consecutive two are contained in a larger ball where one still has the almost-minimizing property for the approximating surfaces $\Sigma_j$.  Consider just the first two such balls in the chain, $B_1,B_2$ contained in $B$ where the min-max sequence is almost minimizing.  The bulk of the argument is then to show that each ``large component" of $\Sigma_j$ contained in $B_1$ connects to some ``large" component of $\Sigma_j$ in $B_2$ in the larger ball $B$.  Thus one can lift $\gamma$ from ball to ball and go around $\gamma$ as many times as it takes for the pigeonhole principle to guarantee that one has landed in the same connected component one began to close up the curve.  This is indirectly ruling out folding as in Example \ref{bad}. 

In order to prove the Improved Lifting Lemma (Proposition \ref{improvedlifting}) one must be able to lift $\gamma$ with the correct multiplicity:  If $\gamma\subset\Gamma$ and $\Gamma$ occurs with multiplicity $n$, one expects to be able to lift $\gamma$ to $n$ disjoint curves in the approximating surfaces, $\Sigma_j$.     We therefore must allow the balls we take along $\gamma$ to \emph{intersect} so that all behavior of $\Sigma_j$ is accounted for, and the structure of $n$ levels or normals graphs persists along all of $\gamma$.  Allowing the balls to intersect, of course, introduces new difficulties.   

We explain the main idea to deal with intersecting balls in the simplified setting of two intersecting balls $B_1$ and $B_2$  along the curve $\gamma$.  Suppose further that $B_1\cup B_2\subset B$ where the sequence $\Sigma_j$ is still almost-minimizing in $B$.  We can first take the $1/j$ replacement $V_j$ for $\Sigma_j$ in $B_1$.  In the interior of $B_1$ , $V_j$ is a union of $n$ normal graphs by Lemma \ref{replacementtheory} (though we have no information regarding how it looks at the boundary of $B_1$).  Then we can adjust $V_j$ so it is smooth over the boundary of $B_1$ and argue that $V_j$ is still almost minimizing in $B$ and therefore also $B_2$.  Thus $V_j$ converges to the same limit as $\Sigma_j$.  Then we can take the $1/j$ replacement of $V_j$ in $B_2$ to arrive at a new surface $W_j$.  Again $W_j$ has the same limit as $\Sigma_j$.  By Schoen's estimates, we obtain that $W_j$ is an honest union of $n$-graphs in $B_2\cup B_1$ except for a small wedge region $W:=(B_2\setminus B'_2)\cap B_1$, where $B'_2$ denotes the ball $B_2$ shrunk slightly.  Bridging this gulf comprises what we call the No Folding Proposition \ref{nofolding} - but now $W_j$ restricted to $W$ is a minimal surface, no longer an arbitrary smooth varifold not satisfying any PDE.  We then use an integrated Gauss-Bonnet argument to bound the $L^2$ norm of the second fundamental form of $W_j$ over the wedge region $W$.   This implies graphical smooth convergence of $W_j$ away from finitely many points, so by perturbing the curve $\gamma$ slightly to avoid these points, we obtain the desired result.

Our argument is fundamentally different from the original lifting lemma in that we will at the outset abondon all hope to lift any curves to the original min-max sequence.  Instead we use stable replacements of the sequence and lift curves there to take advantage of Schoen's curvature estimates. 
%\begin{figure}
%\begin{minipage}[t]{5.5cm}
%\includegraphics[width=0.9\textwidth]{ruledout.jpg}
%\caption{The lifting lemma rules this type of folding convergence out - every component in $B_1$ must connect to one %in $B_2$}
%\label{fig:braun4c}
%\end{minipage}
%\hfill
%\begin{minipage}[t]{5.5cm}
%\includegraphics[width=0.9\textwidth]{notruledout.jpg}
%\caption{The original lifting lemma is not strong enough to rule this out. The bottom component in $B_1$ does not %connect to the bottom component in $B_2$, so one cannot extend $\gamma$ in this level}
%\label{fig:braun4c}
%\end{minipage}
%\hfill
%\end{figure}
\\
\\
\noindent
\emph{Proof of Improved Lifting Proposition \ref{improvedlifting}:}
\subsection{Set up}
Let  $\Sigma_j$ be a $1/j$-almost minimizing min-max sequence of genus $g$ surfaces.  Without loss of generality, we can assume that $\Sigma_j\rightarrow n\Gamma$, where $\Gamma$ is a connected smooth embedded minimal surface and $n$ is a positive integer.  By Theorem \ref{am}, we obtain that for each $x\in\Gamma$, there exists an $r(x) > 0$ such that $\Sigma_j$ is $1/j$ almost minimizing in any annulus centered at $x$ of outer radius at most $r(x)$ when $j$ is large enough.  Consider the covering of $\Gamma$ given by $\cup_{x\in\Gamma}B_x(r(x))$.  By compactness of $\Gamma$, there is a finite set $\mathcal{S}=\left\{x_1,x_2,...,x_n\right\}$ such that the balls $\left\{B_{x_i}(r(x_i))\right\}_{i=1}^n$ cover $\Gamma$.  Thus for any $x\in\Gamma\setminus\left\{x_1,x_2,...,x_n\right\}$, there is a radius $\rho(x)$ such that $\Sigma_j$ is $1/j$ almost minimizing in $B_x(\rho(x))$ for $j$ large enough: for $x\in B_{x_k}(r(x_k))\setminus\mathcal{S}$, set 
\begin{equation}
\rho(x) = \frac{1}{2}\text{min}(\text{dist}(x,\partial B_{x_k}(r(x_k))),\text{dist}(x,x_k)).
\end{equation}

By Lemma \ref{genuscollapse}, after passing to a subsequence of $\Sigma_j$ (not relabeled) there is another finite set of points $G=\left\{y_1,y_2,...,y_k\right\}$ such that if $x\in\Gamma\setminus G$, there is a radius $r'(x)$ such that $g(\Sigma_j\cap B_x(r(x))=0$ for $r(x)\leq r'(x)$.  For $x\in\Gamma\setminus\left\{x_1,...x_n,y_1,...,y_k\right\}$, set $r''(x)=\min(r(x),r'(x))$ and define $\mathcal{B}:=\left\{x_1,...x_n,y_1,...,y_k\right\}$.  Then for any $x\in\Gamma\setminus\mathcal{B}$ there is a radius $r''(x):=\min(\rho(x),r'(x))$ such that 
\begin{enumerate}
\item $g(\Sigma_j\cap B_x(r(x)))=0$ for $j$ large enough and $r(x)\leq r''(x)$,
\item $\Sigma_j$ is $1/j$-almost minimizing in $B_x(r(x))$ for $j$ large enough and  $r(x)\leq r''(x)$.
\end{enumerate}

Now we proceed to proving Proposition \ref{improvedlifting}.  For simplicity we will first assume $k=1$ (i.e, that we are lifting one curve $\gamma:=\gamma_1$) parameterized by arc-length.  At the end of the proof (Step 9 in the outline), we explain the straightforward changes needed to lift multiple curves.  Let us choose $\epsilon_0$ smaller than the injectivity radius of $\Gamma$, the convexity radius of $M$, the scale on which the monotonicity formula \ref{mono} holds, the radius in Appendix A, and the small $\eta$ for which the projection map from $T_\eta(\Gamma)$ to $\Gamma$ is well-defined.  Fix then $\epsilon<\epsilon_0$.

If $\gamma$ intersects the set $\mathcal{B}$, perturb it slightly to avoid the finite set and so that it still lies in $T_{\epsilon}(\gamma)$ and is homotopic to $\gamma$ (we still call the perturbed curve $\gamma$).  By the preceding discussion, for any $x\in\gamma$, there exists a ball centered at $x$ in which $\Sigma_j$ contains no genus and is $1/j$-almost minimizing for $j$ large enough.

For any small $r<\epsilon$, consider $2k$ points $p_0,p_1, ..., p_{2k-1}$ spread along $\gamma$ whose distance in $\Gamma$ between neighboring points is $r$ (by convention set $p_{2k}:=p_0$).  After small homotopy supported within $T_\epsilon(\gamma)$, by shrinking $r$, we can further suppose that for each $i\in\{0,...,2k-1\}$, the curve $\gamma(s)$ restricted to the interval $[p_i,p_{i+1}]$ coincides with the geodesic segment of length $r$ in $\Gamma$ joining $p_i$ to $p_{i+1}$.   The family of ``small" balls $\cup_{i=0}^{2k-1}B_{p_i}(3r/4)$ is a covering of $\gamma$.  We also consider the covering of $\gamma$ by a family of $k$ ``large" balls $\cup_{i=0}^{k-1}B_{p_{2i+1}}(15r/8)$ (based only around the points $p_i$ when $i$ is odd).  By choosing $r$ small enough, these large balls are all contained still in $T_\epsilon(\gamma)$.  Setting $r_1:=3r/4$ and $r_2:=15r/8$ we choose $r$ and the location of the balls so that in addition:
\begin{enumerate}
\item For $j\in\{0,\ldots,2k-1\}$, $B_{p_j}(r_1)$ intersects $B_{p_{j-1}}(r_1)$ and $B_{p_{j+1}}(r_1)$ (here if $p_j = p_0$ we set $p_{j-1}:= p_{2k-1}$ and if $p_j = p_{2k-1}$ we set $p_{j+1} := p_0$) and is disjoint from the other balls $B_{p_l}(r_1)$ along $\gamma$,
\item For $j\in\{0,1,..k-1\}$ we have $\bar{B}_{p_{2j}}(r_1)\cup\bar{B}_{p_{2j+1}}(r_1)\cup\bar{B}_{p_{2j+2}}(r_1)\subset B_{p_{2j+1}}(r_2)$,
\item For $j\in\{0,1,..k-1\}$, if $r$ is even and $B_{p_{r}}(r_1)$ is not disjoint from $B_{p_{2j+1}}(r_2)$, then $r=2j$ or $r=2j+2$,
\item $\Sigma_j$ is $1/j$ almost minimizing in $B_{p_{2i+1}}(r_2)$ for $i\in\{0,1,\ldots k-1\}$ and $j$ large enough,
%\item  $g(\Sigma_j\cap B_{p_{2i+1}}(r_2)) =0$ for  $i\in\{0,1,\ldots k-1\}$
\item $\Gamma$ intersects the boundaries of the balls  $B_0(r),B_1(r)\ldots,B_{2k-1}(r)$ transversally for all $r$ in a neighborhood of $r_1$
\item  $g(\Sigma_j\cap B_{p_{2i+1}}(r_2)) =0$ for  $i\in\{0,1,\ldots k-1\}$.
\end{enumerate}
\noindent
Loosely speaking, each large ball of radius $r_2$ contains three of the consecutive smaller balls.  Each small ball of radius $r_1$ based about $p_i$ is contained entirely in two of the larger balls when $i$ is even, while if $i$ is odd it is contained entirely in only one of the larger balls (and intersects two others).  The choice of $r_1$ and $r_2$ is simply to guarantee the good intersecting properties (1), (2) and (3) in Euclidean space, which then also hold on a small enough scale in a Riemannian manifold.  Item (5) can also be guaranteed because at a small enough scale, $\Gamma$ looks roughly flat.

Let us denote  $B_{p_i}(r_1)$ by $B_i$ and the ball $B_{p_i}(r_1(1-\delta))$ by $B_i(1-\delta)$.  Also set $B_E:=\bigcup_{i\in\{0,2,4...2k-2\}}B_i$ and  $B_O := \bigcup_{i\in\{1,3,\ldots,2k-1\}} B_i$.   

Moreover, again since $\Gamma$ looks roughly flat on a small enough scale, by further shrinking of $r$, we can arrange that for all $i$,
\begin{equation}\label{nb}
\exp_{p_i}^{-1}(\gamma\cap B_i)\subset T_{r/100}(\{\mbox{some line through the origin in } T_{p_i}M\})
\end{equation}
In other words, $\gamma$ roughly speaking cuts the balls comprisng $B_E\cup B_O$ in half.
%\begin{figure}
%\begin{minipage}[t]{4.5cm}
%\includegraphics[width=0.9\textwidth]{covering.jpg}
%\caption{The setup: every three balls in a larger ball in which we still have the $1/j$-a.m. property}
%\label{fig:braun4c}
%\end{minipage}
%\hfill
%\begin{minipage}[t]{4.5cm}
%\includegraphics[width=0.9\textwidth]{regions.jpg}
%\caption{The tiny wedge region W - we have good ambient curvature bounds on "most" of the boundary of W }
%\label{fig:braun6c}
%\end{minipage}
%\hfill
%\end{figure}
\subsection{Outline of Argument}
The remainder of the proof of Proposition \ref{improvedlifting} consists of nine steps, each of which we elaborate on afterwards as necessary:
\\
\\
\textbf{Step 1:}   By (4) in the Setup, $\Sigma_j$ is $1/j$ almost minimizing in each of the even balls comprising $B_E$.
Let $V_j$ be the limit of a minimizing sequence to Problem($\Sigma_{j}$,$Is_{j}(\Sigma_j,B_E)$). Recall that by Remark \ref{several}, we can choose the minimizing sequence so that $V_j$ coincides with the $1/j$-replacement of $\Sigma_j$ in each of the balls comprising $B_E$. By Lemma \ref{replacementtheory}, $V_j$ is a stable minimal surface (with multiplicity $1$) inside $B_E$ converging to $n\Gamma$ smoothly on compact subsets of $B_E$.  Fix $\delta>0$ small enough so that $r_1(1-\delta)$ is contained within the interval provided by (5) for each $B_i$.  The surface $V_j$ restricted to $B_0(1-\delta)$ is a union of $n$ normal graphs over $\Gamma$. Therefore $V_j\cap\partial B_1\cap B_0(1-\delta)$ is a union of $n$ segments  $\left\{\alpha_i\right\}_{i=1}^n$.  Without loss of generality we will henceforth only focus on lifting the curve $\gamma$ from the center of $B_0$ to the center of $B_1$.
\\
\\
\textbf{Step 2:}
We prove that $V_j$ from Step 1 arises topologically from $\Sigma_j$ after finitely many surgeries.  In particular $V_j$ contains no genus in any of the balls comprising $B_E\cup B_O$.  This step is achieved in Section \ref{aftersur}.
\\
\\
\textbf{Step 3:}
Since $\alpha_i\subset B_0(1-\delta)$ and thus dist($\alpha_i,\partial B_0)>\delta r_1$, we obtain by Schoen's estimates \eqref{schoen} that $|A_{V_j}|^2$ restricted to the curves $\alpha_i$ is bounded by a fixed constant depending only on $\delta r_1$. By Lemma \ref{intersection} we can then bound the ambient curvature of the curves $\alpha_i:$
\begin{equation}
|k_M(\alpha_i)|\leq\frac{|A_{V_j}|(\alpha_i)+|A_{\partial B_1}|(\alpha_i)}{|\sin\alpha|},
\end{equation}
where $\alpha$ is the angle between $V_j$ and $\partial B_1$.  But by construction (item (5) in the Setup) the angle between $\Gamma$ and $\partial B_1$ is bounded away from 0 so that by the smooth convergence of $V_j$ to $\Gamma$ in $B_0(1-\delta)$ the angle between $V_j$ and $\partial B_1\cap B_0(1-\delta)$ is also bounded away from zero when $j$ is large.  Thus altogether we obtain bounds (independent of $j$) on the ambient curvature of the $n$ curves $\{\alpha_i\}_{i=1}^n$ comprising $V_j\cap\partial B_1\cap B_0(1-\delta)$.  In other words, for some $C>0$ independent of $j$, 
\begin{equation}\label{ambient1}
|k_M(\alpha_i)|\leq C\mbox{  for  }1\leq i\leq n.
\end{equation}
\\
\textbf{Step 4:} $V_j$ will not be smooth at the boundary of $B_E$, so one must smooth the sequence $V_j$ slightly outside of $B_E$ to produce $\tilde{V}_j$ (so that the area of the symmetric difference of $V_j$ and $\tilde{V}_j$ approaches $0$ as $j\rightarrow\infty$ and $\tilde{V}_j\rightarrow\Gamma$.)  We then argue that $\tilde{V}_j$ is still $1/j$-almost minimizing in the larger balls $B_{p_1}(r_2),...,B_{2k-1}(r_2)$ and therefore also in each of the odd balls comprising $B_O$ since each odd ball is contained in some ``large" ball by item (2) in the Setup. (Recall that if $\Sigma$ is $1/j$-almost minimizing in $U$, then $\Sigma$ is $1/j$-almost minimizing in any subset of $U$.)
\\
\\
\textbf{Step 5:}   We can now consider a minimizing sequence for Problem($\tilde{V}_j$, $Is_j( \tilde{V}_j,B_O)$), and suppose this minimizing sequence converges to $W_j$.  As in Step 1 by Lemma \ref{replacementtheory}, the sequence $W_j$ converges to $n\Gamma$ and restricted to $B_1(1-\delta)$ consists of a union of $n$ graphs for $j$ large enough.  As in Step 3 we have uniform bounds on the ambient curvature of the $n$ curves $\left\{\beta_i\right\}_1^n$ comprising $W_j\cap B_0(1-\delta)\cap\partial B_1(1-\delta)$ (again we focus on only the first jump, from ball $B_0$ to $B_1$).  \\
\\
We summarize our progress so far:
\begin{enumerate}
\item $W_j$ converges smoothly as $n$ graphs to $\Gamma$ on $B_0(1-\delta)\setminus B_1$ 
\item $W_j$ converges smoothly as $n$ graphs to $\Gamma$ on $B_1(1-\delta)$
\item $|k_M(\alpha_i)|\leq C\mbox {    for    }1\leq i\leq n$
\item $|k_M(\beta_i)|\leq C\mbox {    for    }1\leq i\leq n$
\end{enumerate}
\noindent
\textbf{Step 6:}   It remains to show that $W_j$ restricted to the tiny wedge region $$W = B_0(1-\delta)\cap (B_1\setminus B_1(1-\delta))$$ also consists of $n$ normal graphs.  The wedge $W$ is a three-ball with piecewise smooth boundary. Unfortunately we will not be able to prove that $W_j$ consists of $n$ graphs in $W$ since we cannot rule out small necks being pushed to the boundary of $W$.  We use instead a Gauss-Bonnet argument using (3) and (4) of Step 5 and comprising Sections \ref{beg} and \ref{beg2}.  Specifically we find a smaller region $\tilde{W}\subset W$ containing a neighborhood of $\gamma\cap W$ that connects $\partial B_1\cap B_0(1-\delta)$ to $\partial B_1(1-\delta)\cap B_0(1-\delta)$ and on which one has the bound:
\begin{equation}\label{ye}
\sup_j\int_{\tilde{W}\cap W_j}|A_{W_j}|^2 <\infty.
\end{equation}
\noindent
\textbf{Step 7:}  By standard results due to Choi-Schoen \cite{cs} (and an extension by Brian White \cite{W}) \eqref{ye} implies that away from finitely many points in $\tilde{W}$, $W_j$ converges as graphs to $\Gamma$.  Since the singular set consists of finitely many points, we can lift our original curve $\gamma$ through $\tilde{W}$ on $n$ different possible levels avoiding this set.  In other words, adjusting $\gamma$ slightly again $T_\delta(\gamma)$ is a union of $n$ normal graphs for $\delta$ small enough.  We give more details in Section \ref{wh}.
\\
\\
\noindent
\textbf{Step 8:}  Together Steps 1-7 allow us to perturb $\gamma$ within $T_\epsilon(\gamma)$ and perform surgeries on $\Sigma_j$ to obtain $\tilde{\Sigma}_j$ so that in some neighborhood of $\gamma$, $\tilde{\Sigma}_j$ consists of $n$ normal graphs converging to $n\Gamma$.   Thus if $\Gamma$ is orientable, it follows that $p^{-1}(\gamma)\cap T_\epsilon(\Gamma)\cap\tilde{\Sigma}_j$ consists of $n$ closed curves each projecting with degree $1$ onto $\gamma$.  Similarly, if $\Gamma$ is non-orientable, and $p^{-1}(\gamma_i)\cap T_\epsilon(\Gamma)$ is a M\"obius band, then $p^{-1}(\gamma)\cap T_\epsilon(\Gamma)\cap\tilde{\Sigma}_j$ consists of $n/2$ connected curves, each double covering $\gamma$ via nearest point projection.  The integer $n$ must be even in this case: otherwise $p^{-1}(\gamma)\cap T_\epsilon(\Gamma)\cap\tilde{\Sigma}_j$ contains a curve isotopic to the core of the M\"obius band $p^{-1}(\gamma)\cap T_\epsilon(\Gamma)$.  The normal bundle over this curve is non-trivial, and thus $\tilde{\Sigma}_j$ would also be non-orientable, contradicting the fact that $\tilde{\Sigma}_j$ arises from surgeries performed on an orientable surface. 
\\
\\
\noindent
\textbf{Step 9:} Lastly we explain in Section \ref{multiple} the changes needed to lift multiple curves $\{\gamma_i\}_{i=1}^k\subset\Gamma$. 

\subsection{Completion of Step 4: ``$V_j$ still almost minimizing"}\label{stillam}
Here we prove that the $V_j$ constructed in the outline can be smoothed slightly at the boundary of $B_E$ to give a new sequence $\tilde{V}_j$ which has the same limit in $B_E\cup B_O$ and is $1/j$-almost minimizing in each of the balls comprising $B_O$.   This statement follows from the following general lemma:
\begin{lemma}\label{stillam2}
Let $B$ an be open ball in a 3-manifold of sufficiently small radius, and $A$ a union of balls contained in $B$ so that $\overline{A}\subset B$.  Let $\Sigma$ be a smooth embedded surface that is $1/j$-almost minimizing in $B$ with $\partial\Sigma\subset\partial B$.   Let $\phi^l(1,\Sigma)$ be a minimizing sequence to Problem($\Sigma$, $Is_{j}(\Sigma,A)$) such that $\phi^l(1,\Sigma)\rightarrow V$.  Then there exists $\epsilon_1>0$ so that for $\epsilon<\epsilon_1$, there exists a smooth surface $\hat{V}$ such that
\begin{enumerate}
\item $\hat{V}=V$ in $B\setminus (T_\epsilon(A)\setminus A)$ 
\item $\hat{V}$  is $1/j$-almost minimizing in $B$ 
\item  $|\mathcal{H}^2(\hat{V})-\mathcal{H}^2(V)|\leq\epsilon$.
\end{enumerate}
\end{lemma}
\begin{rmk}
For an open set $A$, $T_\epsilon(A)$ denotes all points within distance $\epsilon$ of $A$.
\end{rmk}
 \begin{proof}
We can assume without loss of generality that $\mathcal{H}^2(V)<\mathcal{H}^2(\Sigma)$.  Fix $\epsilon_1:=(\mathcal{H}^2(\Sigma)-\mathcal{H}^2(V))/2\leq 1/2j$.
By Lemma 7.4 in \cite{cd}, $V$ is a smooth minimal surface in $A$ which by Lemma 8.1 in \cite{dp} occurs with multiplicity $1$ and has boundary coinciding with that of $\Sigma$ in $\partial A$.  The surface $V$, however, might not be smooth over $\partial A$.  To correct this, for any fixed $\epsilon<\epsilon_1$ take a sequence of isotopies $\phi^l_2$ supported in $T_\epsilon(A)\setminus\overline{A}$ such that
\begin{enumerate}
\setlength{\itemsep}{1pt}
  \setlength{\parskip}{0pt}
  \setlength{\parsep}{0pt}

\item $|\mathcal{H}^2(\phi^l_2(t,\Sigma))-\mathcal{H}^2(\Sigma)|\leq\epsilon$ for all $0\leq t\leq 1$
\item $(\phi^l_2\circ\phi^l)(1,\Sigma)\rightarrow\hat{V}$ where $\hat{V}$ is smooth in $B$
\item $\hat{V}=V$ in $A$.
\end{enumerate}
\noindent
Note from the definition of $\phi^l$ we obtain
\begin{equation}\label{am13}
\mathcal{H}^2(\phi^l(t,\Sigma))\leq \mathcal{H}^2(\Sigma)+\frac{1}{8j}.
\end{equation}   
\noindent
We also obtain for large $l$, 
\begin{align}
\mathcal{H}^2(\phi_2^l(t,\phi^l(1,\Sigma))) & \leq\mathcal{H}^2(\phi^l(1,\Sigma))+\epsilon  \\&\leq \mathcal{H}^2(V)+2\epsilon \\&\leq \mathcal{H}^2(V)+(\mathcal{H}^2(\Sigma)-\mathcal{H}^2(V))\label{de}\\& \leq \mathcal{H}^2(\Sigma) \label{f}.
\end{align}
In \eqref{de} we have used that $\epsilon<\epsilon_1$.  Together \eqref{f} and \eqref{am13} imply that $\phi_2^l\circ\phi^l$ is contained in $Is_{j}(\Sigma,B)$. 

Suppose toward a contradiction that $\hat{V}$ were not $1/j$-almost minimizing in $B$.  Then there exists an isotopy $\psi$ supported in $B$ such that
\begin{equation}\label{am11}
 \mathcal{H}^2(\psi(1,\hat{V}))\leq \mathcal{H}^2(\hat{V})-1/j,
\end{equation}
and
\begin{equation}\label{am12}
 \mathcal{H}^2(\psi(t,\hat{V}))\leq \mathcal{H}^2(\hat{V})+1/{8j}\mbox{ for all }0\leq t\leq 1.
\end{equation}

We will now show that \eqref{am11} and \eqref{am12} contradict the fact that $\Sigma$ is $1/j$-almost minimizing in $B$.

Let $\phi^l_3$ denote the concatenated isotopy $\phi^l_2\circ\phi^l$.  Since $\phi^l_3(1,\Sigma)\rightarrow\hat{V}$ in the sense of varifolds, we obtain by continuity that for $l$ large enough,  
\begin{equation}\label{c}
\mathcal{H}^2(\psi(t,\phi^l_3(1,\Sigma)))-\mathcal{H}^2(\psi(t,\hat{V})))\leq\epsilon\mbox{    for all   } 0\leq t\leq 1.
\end{equation}
\noindent
Thus by \eqref{c}, \eqref{am11} and our choice of $\epsilon$, we obtain
\begin{align}
\mathcal{H}^2(\psi(1,\phi_3^l(1,\Sigma))) &\leq \mathcal{H}^2(\psi(1,\hat{V}))+\epsilon\\&\leq \mathcal{H}^2(\hat{V})+\epsilon-1/j\\&\leq\mathcal{H}^2(V)+2\epsilon-1/j\\&\leq\mathcal{H}^2(\Sigma)-1/j\label{fff}.
\end{align}
By \eqref{am12}, \eqref{c}, and our choice of $\epsilon$ we obtain
\begin{align}
\mathcal{H}^2(\psi(t,\phi_3^l(1,\Sigma)))& \leq \mathcal{H}^2(\hat{V})+\frac{1}{8j}+\epsilon\\&\leq \mathcal{H}^2(V)+\frac{1}{8j}+2\epsilon \\&\leq\mathcal{H}^2(\Sigma)+\frac{1}{8j}\label{ff}.
\end{align}
\noindent
Equations \eqref{am13}, \eqref{f} and \eqref{ff} imply that for $l$ large, the isotopy $\psi\circ\phi_2^l\circ\phi^l$ is contained in $Is_{j}(\Sigma, B)$.  Thus \eqref{fff} contradicts the fact that $\Sigma$ is $1/j$-almost minimizing in $B$.
\end{proof}

Given Lemma \ref{stillam2}, we can easily complete Step 4.  Apply Lemma \ref{stillam2} successively to $\Sigma_j$ with  $B:=B_{p_{2i+1}}(r_2)$ and $A:=B_{2i}\cup B_{2i+2}$ for each $i\in\{0,1,...,k-1\}$ (by item (3) in the Setup, there are no other even balls aside from $A$ intersecting each $B$).

\subsection{Completion of Step 6: Crossing the gap}\label{beg}
The goal of this section is to begin a proof of Step 6 in the outline.  Step 6 will be completed in the next subsection. 
We recall the Gauss-Bonnet formula with boundary for our sequence of minimal surface $W_j$ intersected transversally with an open set $B\subset B_E\cup B_O$.  We will assume that $B$ is diffeomorphic to a three-ball but has piecewise smooth boundary.

\begin{align}\label{b}
\int_{W_j\cap B} K_{W_j} + \int_{\partial B\cap W_j}k_g =& 2\pi(2n(W_j\cap B)-2g(W_j\cap B))\nonumber\\
									&  -2\pi e(W_j\cap B)-T(W_j\cap\partial B)
\end{align}

Here $e(W_j\cap B)$ is the number of components of $W_j$ in $\partial B$, $n(W_j\cap B)$ is the number of components of $W_j$ in $B$, $g(W_j\cap B)$ is the genus of $W_j$ in $B$ and $T(W_j \cap\partial B)$ denotes the total jump angle of $W_j \cap\partial B$ at points where the boundary of $\partial W_j$ crosses the non-smooth part of $\partial B$.  Recall that if $n(W_j\cap B)>1$, then $g(W_j\cap B)$ is defined to be the sum of the genera of each component.  
 By minimality and the Gauss equation, we obtain 
\begin{equation}\label{gauss}
K_{W_j} = \text{sec}_M(e_1,e_2) - \frac{|A|^2}{2},
\end{equation}
where $\text{sec}_M(e_1,e_2)$ is the sectional curvature of $M$ in the plane determined by an orthonormal frame of $W_j$.  Thus we obtain by plugging \eqref{gauss} into \eqref{b}:

\begin{align}\label{w1}
\int_{W_j\cap B} |A|^2 = &  2\int_{W_j\cap B} \text{sec}_M(e_1,e_2) + 2\int_{\partial B\cap W_j}k_g\nonumber \\
                                         &+8\pi g(W_j\cap B) -8\pi n(W_j\cap B) +4\pi e(W_j\cap B)\nonumber\\&
			              +2T(W_j\cap\partial B)
\end{align}
Discarding the $n(W_j\cap B)$ term since it has a favorable sign, and taking absolute value we obtain from \eqref{w1}:

\begin{align}\label{nice}
\int_{W_j\cap B} |A|^2\leq &  2 \mathcal{H}^2(W_j\cap B))\sup_M|\text{sec}_M|+ 2\int_{\partial B\cap W_j}|k_g|\nonumber\\
                                              & +8\pi g(W_i\cap B) + 4\pi e(W_j\cap B)+2|T(W_j\cap\partial B)|
\end{align}
Note that $g(W_j\cap B) \leq g$.  Also $$\sup_j\mathcal{H}^2(W_j\cap B)<\infty,$$ since $W_j$ converges to $ n\Gamma$ as varifolds.  

For a curve $\gamma(s)$ parameterized by arclength that is contained in the submanifold $W_j$, the norm of the vector $\nabla^M_{\dot{\gamma}(s)}\dot{\gamma}(s)$ denotes the ambient curvature $|k_M|(\dot{\gamma}(s))$ of $\dot{\gamma}(s)$ while the geodesic curvature $k_g(\gamma(s))$ of $\gamma(s)$ considered a curve in $W_j$ is the norm of the vector $\nabla^{W_j}_{\dot{\gamma}(s)}\dot{\gamma}(s)$, i.e., $\nabla^{M}_{\dot{\gamma}(s)}\dot{\gamma}(s)$ projected onto $W_j$.  Thus we obtain, $|k_g|\leq|k_M|$ along any such curve.   Combining this with \eqref{nice} we obtain:
\begin{equation}\label{ifonly}
\int_{W_j\cap B} |A|^2\leq C +  2\int_{\partial B\cap W_j}|k_M|+ 4\pi e(W_j\cap B)  +2|T(W_j\cap\partial B)|,
\end{equation}
\noindent
where $C$ does not depend on $j$. 
We would like in \eqref{ifonly} to set $B = W$ (recall $W$ is the wedge region $B_1\setminus B_1(1-\delta)\cap B_0(1-\delta)$).  Thus by \eqref{ifonly}  if we could bound the curvature $|k_M|$ of $\partial B\cap W_j$ as well as $e(W_j\cap B)$ and the total jump angle independently of $j$, we would obtain

\begin{equation}\label{l2bound}
\sup_j\int_{W_j\cap W}|A|^2<\infty,
\end{equation}
as desired.
\indent

Unfortunately one has no control on the curvature of the boundary $W\cap W_j$ or of the number of boundary components.  Instead we use a trick of Ilmanen (cf.\ Lemma 1 in \cite{i}) to \emph{average} \eqref{ifonly} over slight shrinkings of $W$ in order to obtain an $L^2$ bound for $|A|$ on a slightly smaller region than $W$ that nonetheless connects $\partial B_1\cap B_0(1-\delta)$ to $\partial B_1(1-\delta)\cap B_0(1-\delta)$ and contains $\gamma$. \\
\indent
 In order to formulate the result, we first parametrize the region $W$.  Consider exponential normal coordinates around $p = p_1$. $$\exp_p:B_{r_1}(0)\subset T_pM\rightarrow M$$  By \eqref{nb}, after rotation in $T_pM$ we can guarantee that 
\begin{equation}\label{nbd}
\exp_p^{-1}(\gamma\cap B_1)\subset T_{\eta}(\{x\mbox{-axis in }T_p M\})
\end{equation}\noindent for some small $\eta<< r$.   We then use spherical coordinates $(\rho,\theta,\phi)$ to parameterize the vector space $T_p M$, where $\theta$ is the polar angle (normalized to be zero on the negative $x$-axis) and $\phi$ is the azuthimal-coordinate.  For any $\phi_0<\pi/2$ and $\theta_0<2\pi$ we define the wedge region:
\begin{equation} 
R_{\theta_0,\phi_0} = \exp_p(\{(\rho,\theta,\phi)\in T_pM: (1-\delta)r_1\leq\rho\leq r_1, |\phi-\pi/2|\leq\phi_0, |\theta|\leq\theta_0\}).
\end{equation}
\noindent
By \eqref{nbd} we can choose $\theta_0$ and $\phi_0$ suitably small so that
\begin{enumerate}
\item $\gamma\cap W\subset R_{\theta_0/2,\phi_0}$.  
\item For $j$ large, $W_j\cap\partial  R_{\theta_0,\phi_0}\cap\{\rho=r_1\}$ and $W_j\cap\partial  R_{\theta_0,\phi_0}\cap\{\rho=r_1(1-\delta)\}$ each consist of $n$ curves with bounded curvature.
\end{enumerate}
\noindent
Item (2) follows from Steps 1) and 5) and item (1) follows from \eqref{nbd}.  Moreover since $\Gamma$ is flat on a small enough scale, by shrinking $r$ if necessary, we can further guarantee
\begin{equation}\label{surfacecontained}
\Gamma\cap R_{\theta_0,\phi_0} \subset R_{\theta_0,\phi_0/2}.
\end{equation}
\noindent
We prove the following
\begin{prop} [No Folding]\label{nofolding}  
\begin{equation}\label{whatwewant}
\sup_j \int_{W_j\cap R_{\theta_0/2,\phi_0}}|A|^2 < \infty.
\end{equation}
\end{prop}
\noindent 
Proposition \ref{nofolding} rules out behavior as in Example \ref{bad}. 
\begin{rmk} As a heuristic justification for why Proposition \ref{nofolding} rules out folding,  consider the surfaces $C_r\times [0,1]$ in $\mathbb{R}^3$, where $C_r$ is a semi-circle of radius $r$.  Thus the ``folding" occurs along a line of length one.  The principal curvatures are $1/r$ and $0$. Then $|A|^2 =1/r^2$ and $\mathcal{H}^2(C_r\times [0,1])=\pi r$.  Thus $\int_{C_r\times[0,1]} |A|^2 = \pi/r$ and $\int_{C_r\times[0,1]} |A|^2\rightarrow\infty$ as $r\rightarrow 0$.  
\end{rmk}
\begin{rmk}
It is well-known that in a fixed 3-manifold, an area and genus bound for closed minimal surfaces imply a bound on the $L^2$ norm of the second fundamental form (by using the Gauss-Bonnet formula, the Gauss equation, and minimality).  Thus Proposition \ref{nofolding} can be interpreted as a local version of this fact. One could use the arguments here to show that an area and genus bound for minimal surfaces in a fixed ball imply an $L^2$ bound on the second fundamental form in a slightly smaller ball (cf.\ Lemma 1 in \cite{i}).  The technical complication below is that we don't want bounds on a compactly supported interior domain but on a domain part of whose boundary coincides with that of the larger ball.  But the whole point is that using stability and Schoen's estimates we have good curvature bounds for the region where the interior domain touches the exterior domain.
\end{rmk}
\subsection{Completion of Step 6: No Folding}\label{beg2}
The key to proving the No Folding Proposition \ref{nofolding} is that by Step 3 of the outline, the curvature of $W_j$ on ``most" of the boundary of the region $(B_1\setminus B_1(1-\delta))\cap B_0(1-\delta)$ is bounded. 
We now proceed to the proof. Throughout the argument $C$ will denote a constant independent of $j$, changing from line to line and possibly appearing in the same equation with different values. 
\\
\\
\noindent
\emph{Proof of No Folding Proposition \ref{nofolding}}:  
\\
\noindent
\\
\textbf{Step 1: Finding a good cutoff function}\\
We need a cutoff function $\psi(\rho,\theta,\phi)$ defined on $R_{\theta_0,\phi_0}$ in $M$ with the following properties:
\begin{enumerate}
\item $\psi$ restricted to $R_{\theta_0/2,\phi_0}$ is $1$
\item $\psi$ only depends on $\theta$,
\item $\frac{|\nabla^M\psi|^2}{\psi}\leq C$ 
\end{enumerate}
\noindent
We first define a Lipschitz function $f:[-1,1]\rightarrow\mathbb{R}$ as follows:
\begin{equation}
f(x) = \begin{cases} 1 & \mbox{ if } |x|\leq 1/2\\
                                     4(1-x)^2 & \mbox{ if } |x|\geq 1/2\end{cases}
\end{equation}
Notice that we have $f'(x)^2\leq 16f(x)$.  Set $\psi(\rho,\theta,\phi):= f(\theta/\theta_0)$.   The function $\psi$ then satisfies (1), (2), and (3).

%\begin{align}
%\frac{|\nabla\psi|^2}{\psi} &=\frac{|\chi_1\nabla\chi_2|^2+|\chi_2\nabla\chi_1|%^2+2\chi_1\chi_2\nabla\chi_1\nabla\chi_2}%{\chi_1\chi_2}\nonumber \\
           %                       &\leq\frac{|\chi_1|\partial_\phi\chi_2|^2|\nabla\phi|^2}{\chi_2}+\frac{\chi_2|\partial_\theta\chi_1|^2|%\nabla\theta|^2}{\chi_1}+2|(\partial_\theta\chi_1)(\partial_\phi\chi_2)|\langle\nabla\theta,\nabla\phi\rangle|\nonumber\\
	%		 &\leq C|\chi_1|^2|\nabla\phi|^2+C|\chi_2|^2|\nabla\theta|^2+2|(\partial_\theta\chi_1)(\partial_\phi\chi_2)|%\langle\nabla\theta,\nabla\phi\rangle\nonumber|\\
%			&\leq C'\nonumber 
%\end{align}

Note that our cutoff function is unusual in that it does not have compact support on its domain - it is equal to $1$ on the part of $\partial R_{\theta_0/2,\phi_0}$ where we have control over the curvature of $W_j$.  Since $\psi$ is only defined on $R_{\theta_0,\phi_0}$, all level sets or superlevel sets of $\psi$ are understood to be contained in $R_{\theta_0,\phi_0}$.
\\
\\
\noindent

%\begin{figure}[htb]
%\begin{center}
%\includegraphics[height=2in,width=3in,angle=0]{regionawful2.jpg}
%\caption{A typical set $\left\{\psi>t\right\}$.  We have good curvature bounds for the curves on the walls $S_t^1$ and %$S_t^2$.  The surfaces $W_j$ cannot intersect the top and bottom walls at all by the monotonicity formula (See Step %3)}
%\end{center}
%\end{figure}
\noindent
\textbf{Step 2: Using the ambient boundary curvature bounds}\\
For $0\leq t\leq 1$ the region $\left\{\psi > t\right\}\cap R_{\theta_0,\phi_0}$ is a ball with piecewise smooth boundary comprised of six parts.  The first two are the two disks comprising $K_t := \left\{\psi = t\right\}$ where $\theta$ is constant (the ``left" and ``right" components). The third we denote $S_t^1$ is the component contained in $\partial B_1\cap B_0(1-\delta)$.  The fourth component we denote $S_t^2$ is the part of $\partial(\left\{\psi > t\right\})$ contained in $\partial B_1(1-\delta)\cap B_0(1-\delta)$.   The fifth and sixth are the ``top" and ``bottom" components where $\phi$ is equal to either $\pi/2-\phi_0$ or $\pi/2+\phi_0$.  However, by the monotonicity formula \eqref{mono} and \eqref{surfacecontained}, $W_j$ is disjoint from these last two components when $j$ is sufficiently large.

We apply \eqref{ifonly} to $B_t := \left\{\psi > t\right\}\cap R_{\theta_0,\phi_0}$ which yields 

\begin{align}\label{first}
\int_{W_j\cap\left\{\psi>t\right\}}|A|^2\leq & C + 2\int_{\left\{\psi=t\right\}\cap W_j}|k_M|+ 2\int_{S_t^1\cap W_j}|k_M| + 2\int_{S_t^2\cap W_j}|k_M|\\
&+4\pi e(W_j\cap\left\{\psi>t\right\})+2|T(W_j\cap\partial\{\psi>t\})|.
\end{align}

From equations 3) and 4) in Step 5 of the outline and Lemma \ref{intersection} we obtain that the ambient curvature $k_M$ of the curves given by $W_j\cap S^1_t$ and $W_j\cap S^2_t$ is bounded independently of $j$.   Moreover, the lengths of these curves are bounded independently of $j$ because of the smooth convergence $W_j\rightarrow n\Gamma$ restricted to these curves.  Thus we obtain the estimate

\begin{align}
\int_{W_j\cap\left\{\psi>t\right\}}|A|^2\leq & C+ C\int_{\left\{\psi=t\right\}\cap W_j}|k_M|+Ce(W_j\cap\left\{\psi>t\right\})\nonumber\\&+2|T(W_j\cap\partial\{\psi>t\})|.\label{hum}
\end{align}

\noindent
\textbf{Step 3: Handling the number-of-ends term from the Gauss-Bonnet formula}\\ 
\noindent
We now explain how to handle the term $e(W_j\cap\left\{\psi>t\right\})$ for the number of boundary components of $W_j$ in $K_t\cup S_t^1\cup S_t^2$ and also the total turning angle $T(W_j\cap\partial\{\psi>t\})$.  Since $W_j$ restricted to $S_t^1$ and $S_t^2$ consists of $n$ curves, the connected components of these curves number at most $2n$.  Any other curve on $W_j$ in $K_t\cup S_t^1\cup S_t^2$ must be contained entirely in $K_t$.  The set $K_t$ consists of two regions, each of constant $\theta$ (the ``left" and ``right" pieces).    But by Appendix A, any closed curve $\gamma$ contained in $K_t$ satisfies
\begin{equation}
\int_{\gamma} |k_M|ds\geq\pi.
\end{equation} 
Thus for large $j$ we obtain
\begin{equation}\label{e}
e(W_j\cap\{\psi>t\})\leq 2n + \frac{1}{\pi}\int_{\{\psi=t\}\cap W_j} |k_M|ds.
\end{equation}
Moreover, since there are at most $4n$ points where curves on $\partial \{\psi>t\}$ meet the non-smooth parts of the boundary, we obtain
\begin{equation}\label{turn}
|T(W_j\cap\partial\{\psi>t\})|\leq4\pi n.
\end{equation}
 Plugging \eqref{e} and \eqref{turn} into \eqref{hum} we get
\begin{equation}\label{first}
\int_{W_j\cap \left\{\psi>t\right\}}|A|^2\leq  C + C\int_{\left\{\psi=t\right\}\cap W_j}|k_M|ds.
\end{equation}
 %We have also elided one point.  The region bounded by $K_t\cup S_t^1\cup S_t^2$ is not smooth. Thus in our use of %Gauss-Bonnet, we should have added in a term for the turning angle at the nonsmooth part of the boundary.  But %because there are only at most n components on each of $S_t^1$ and $S_t^2$, we can only have at most $4n$ turning %angles, each at most $2\pi$.  Thus the constant appearing on the RHS of \eqref{first} should be slightly larger, but this %does not affect the argument.\\
\\
\noindent
\textbf{Step 4: Applying layercake formula}\\
Recall the following fact from measure theory:  given a measure space $(X,\mu)$ and $f:X\rightarrow\mathbb{R}^+$ Borel measurable there is the layer-cake representation for the integral: 
\begin{equation}\label{measuretheory}
\int_Xf(x)d\mu = \int_0^{\infty}\mu(f^{-1}([t,\infty))dt.
\end{equation}
Setting $X=W_j$, $\mu(Y)=\int_{W_j\cap Y}|A|^2$ and $f=\psi$ in \eqref{measuretheory} we obtain:
\begin{equation}\label{layercake}
\int_{W_j}\psi |A|^2 = \int_0^1\int_{W_j\cap\left\{\psi>t\right\}}|A|^2 d\mathcal{H} dt.
\end{equation} 
\noindent
Combining \eqref{first} and \eqref{layercake} we obtain
\begin{equation}\label{absorb}
\int_{W_j}\psi |A|^2\leq C+\int_0^1\int_{\left\{\psi =t\right\}\cap W_j}|k_M| d\mathcal{H} dt.
\end{equation}
 Since we have a double integral on both sides of \eqref{absorb}, the two sides are comparable and the strategy now is to use our test function $\psi$ to absorb the curvature term on the RHS of \eqref{absorb} on the LHS. We thus must relate $k_M$ to the second fundamental form $A_{W_j}$. For this we need a lemma:
\begin{lemma}[cf.\ Lemma 4 \cite{i}]\label{intersection}
If $\Sigma_1$ and $\Sigma_2$ are two surfaces in a 3-manifold $M$ intersecting transversally in a curve $\gamma(t)$ parameterized by unit speed, then 
\begin{equation}
|k_M|(\gamma(t))\leq\frac{|A_{\Sigma_1}|(\gamma(t))+|A_{\Sigma_2}|(\gamma(t))}{|\sin\alpha|},
\end{equation}
where $\alpha$ denotes the angle between the normal to $\Sigma_1$ and $\Sigma_2$ at $\gamma(t)$.
\end{lemma}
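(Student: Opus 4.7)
The plan is to decompose the ambient acceleration $\nabla_{\dot\gamma}\dot\gamma$ along a well-chosen orthonormal frame of the plane normal to $\dot\gamma$, and then read off its components using the definition of the second fundamental forms of $\Sigma_1$ and $\Sigma_2$. Since $\gamma \subset \Sigma_1 \cap \Sigma_2$ is unit-speed, the ambient curvature vector $k^M\nu := \nabla_{\dot\gamma}\dot\gamma$ lies in the $2$-plane $\dot\gamma^\perp$, and both unit normals $n_1, n_2$ (to $\Sigma_1$ and $\Sigma_2$) also lie in this plane; by transversality they are linearly independent there and make angle $\alpha$.

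First, I would fix the orthonormal basis $\{e_1, e_2\}$ of $\dot\gamma^\perp$ with $e_1 = n_1$, so that $n_2 = \cos\alpha\, e_1 + \sin\alpha\, e_2$ (after choosing an orientation of $e_2$). Writing $\nabla_{\dot\gamma}\dot\gamma = x\, e_1 + y\, e_2$, the definition of $A_{\Sigma_i}$ gives
\begin{equation*}
x = \langle \nabla_{\dot\gamma}\dot\gamma, n_1\rangle = A_{\Sigma_1}(\dot\gamma,\dot\gamma), \qquad
x\cos\alpha + y\sin\alpha = \langle \nabla_{\dot\gamma}\dot\gamma, n_2\rangle = A_{\Sigma_2}(\dot\gamma,\dot\gamma).
\end{equation*}
Solving for $y$ and computing $|k^M|^2 = x^2 + y^2$ yields, after a short expansion,
\begin{equation*}
|k^M|^2 \;=\; \frac{A_{\Sigma_1}(\dot\gamma,\dot\gamma)^2 + A_{\Sigma_2}(\dot\gamma,\dot\gamma)^2 - 2 A_{\Sigma_1}(\dot\gamma,\dot\gamma)\,A_{\Sigma_2}(\dot\gamma,\dot\gamma)\cos\alpha}{\sin^2\alpha}.
\end{equation*}

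Finally, to get the stated inequality I would use the elementary bound $-2ab\cos\alpha \le 2|a||b|$ (valid since $|\cos\alpha|\le 1$), which gives $a^2+b^2 - 2ab\cos\alpha \le (|a|+|b|)^2$. Taking square roots and dividing by $|\sin\alpha|$ yields
\begin{equation*}
|k^M|(\gamma(t)) \;\le\; \frac{|A_{\Sigma_1}|(\dot\gamma,\dot\gamma) + |A_{\Sigma_2}|(\dot\gamma,\dot\gamma)}{|\sin\alpha|},
\end{equation*}
as required. There is no genuine obstacle: the content is purely linear-algebraic, and transversality ($\sin\alpha\neq 0$) is what makes the system invertible. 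The only mild subtlety is the convention for $|A_{\Sigma_i}|(\dot\gamma,\dot\gamma)$, which I interpret as $|A_{\Sigma_i}(\dot\gamma,\dot\gamma)|$, i.e.\ the absolute value of the scalar second fundamental form along $\dot\gamma$; this is what is actually used in Steps 3 and 5 of the outline when comparing ambient geodesic curvature of intersection curves to curvature bounds coming from Schoen's estimates.
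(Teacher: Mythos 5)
Your proof is correct and follows essentially the same route as the paper: both fix the orthonormal frame of $\dot\gamma^\perp$ with $e_1=n_1$ (completing with the Gram--Schmidt normalization of $n_2$), decompose $\nabla_{\dot\gamma}\dot\gamma$ in that frame, arrive at $|k^M|^2 = A_1^2 + (A_2-\cos\alpha\,A_1)^2/\sin^2\alpha$, and conclude with the elementary bound $a^2+b^2-2ab\cos\alpha\le(|a|+|b|)^2$. (Your version in fact corrects a harmless typo in the paper's intermediate unit vector, which should be normalized by $\sin\alpha$ rather than $\sin^2\alpha$.)
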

\begin{proof}
By definition, $|k_M|(\gamma(t))=|\nabla^M_{\dot{\gamma}(t)}\dot{\gamma}(t)|$. Let $n_1$ denote the normal to $\Sigma_1$ and $n_2$ the normal to $\Sigma_2$. By definition of the second fundamental form, we have for $i = 1,2$ 
\begin{equation}\label{secdef}
 A_{\Sigma_i}(\dot{\gamma}(t),\dot{\gamma}(t)) = \langle\nabla_{\dot{\gamma}(t)}\dot{\gamma}(t),n_i\rangle. 
\end{equation} 
The vectors $n_1$, $n_2$ and $\dot{\gamma}(t)$ form a basis for the tangent space of $M$ at $\gamma(t)$.  Indeed, $n_1$ and $n_2$ are linearly independent since $\Sigma_1$ and $\Sigma_2$ intersect transversally, and $\dot{\gamma}(t)$ has no projection onto either $n_1$ and $n_2$ since the curve $\gamma(t)$ is contained in both surfaces.   Since $\gamma(t)$ has unit speed, $\nabla^M_{\dot{\gamma}(t)}\dot{\gamma}(t)$ has no projection onto $\dot{\gamma}(t)$ and we can express $\nabla^M_{\dot{\gamma}(t)}\dot{\gamma}(t)$ as a linear combination of $n_1$ and $n_2$.  Since $\langle n_1,n_2\rangle =\cos\alpha$, the vectors $n_1$ and $\frac{n_2-\langle n_1,n_2\rangle n_1}{\sin(\alpha)}$ are an orthonormal basis for the span of $n_1$ and $n_2$.  Projecting $\nabla^M_{\dot{\gamma}(t)}\dot{\gamma}(t)$ onto this basis we obtain in light of \eqref{secdef} (and writing $A_{\Sigma_i}$ for $A_{\Sigma_i}(\dot{\gamma}(t),\dot{\gamma}(t)))$
\begin{equation}
|k_M|^2 =A^2_{\Sigma_1}+ \frac{(A_{\Sigma_2} - (\cos\alpha) A_{\Sigma_1})^2}{\sin^2\alpha}\leq\frac{(|A_{\Sigma_1}|+|A_{\Sigma_2}|)^2}{\sin^2\alpha}.
\end{equation}
\end{proof}

\textbf{Step 5: Applying the coarea formula and using the good property of cutoff function}\\
\noindent
By the coarea formula we can rewrite the term on the RHS of \eqref{absorb} as follows:
\begin{equation}\label{coarea1}
\int_0^1\int_{\left\{\psi =t\right\}\cap W_j}|k_M| ds dt = \int_{W_j} |\nabla^{W_j}\psi||k_M|d\mathcal{H}.
\end{equation}

In order to compute $ |\nabla^{W_j}\psi|$, fix a point $x\in W_j\cap\left\{\psi =t\right\}$.    Since $K_t = \left\{\psi = t\right\}$ is a level set for $\psi$, we have $\nabla^M\psi = |\nabla^M\psi|n_{K_t}$.  Thus $\langle\nabla^M\psi,n_{W_j}\rangle =  |\nabla^M\psi|\langle n_{K_t}, n_{W_j}\rangle = |\nabla^M\psi||\cos\alpha|$.  Since $\nabla^{W_j}\psi$ is a projection onto $W_j$, we thus obtain:

\begin{equation}\label{forcoarea}
 |\nabla^{W_j}\psi| = |\nabla^M\psi||\sin\alpha|.
\end{equation}
\noindent
Plugging \eqref{forcoarea} back into \eqref{coarea1} and using \eqref{absorb} we obtain

\begin{equation}\label{as}
\int_{W_j}\psi |A_{W_j}|^2\leq C+ C\int_{W_j}|k_M||\nabla^M\psi||\sin\alpha|.
\end{equation}
\noindent
By Lemma \ref{intersection} we obtain from \eqref{as}

\begin{equation}
\int_{W_j}\psi |A_{W_j}|^2\leq C + C\int_{W_j}(|A_{W_j}|+ |A_{K_t}|)|\nabla^M\psi|.
\end{equation}
\noindent
For some $C<\infty$, there holds
\begin{equation}
\sup_{t\in [0,1]}|A_{K_t}| \leq C,
\end{equation}
because $K_t$ is a family of smooth surfaces.  Thus we obtain
\begin{equation}\label{sinscancel}
\int_{W_j}\psi |A_{W_j}|^2\leq C + C\int_{W_j}|A_{W_j}||\nabla^M\psi|+C\int_{W_j}|\nabla^M\psi|.
\end{equation}
\noindent
Multiplying numerator and denominator in the integral terms in \eqref{sinscancel} by $\sqrt{\psi}$ and applying Cauchy Schwartz as well as the bound from Step 1):
\begin{equation}
\frac{|\nabla^M\psi|^2}{\psi}\leq C,
\end{equation}
we obtain for any $\epsilon >0$, a constant $C(\epsilon)$ so that

\begin{equation}\label{finally}
\int_{W_j}\psi |A_{W_j}|^2\leq C(\epsilon)+ C\epsilon\int_{W_j}\psi|A_{W_j}|^2+C\mathcal{H}^2(W_j\cap R_{\theta_0,\phi_0}).
\end{equation}
\noindent
Taking $\epsilon$ sufficiently small in \eqref{finally}, we can move the integral term on the RHS of \eqref{finally} to the LHS and obtain (since also $\mathcal{H}^2(W_j\cap R_{\theta_0,\phi_0})$ is bounded independently of $j$),

\begin{equation}\label{oy}
(1-C\epsilon)\int_{W_j}\psi|A_{W_j}|^2\leq C(\epsilon)+C, 
\end{equation}
which completes the proof of \eqref{whatwewant}. 
\qed
\\
\\
Thus we can take $\tilde{W}:=R_{\theta_0/2, \phi_0}$, yielding \eqref{ye} and this completes the proof of Step 6.

\subsection{Completion of Step 2: $1/j$-replacements arise via surgery}\label{aftersur}
The goal of this section is to prove the following Proposition, establishing the fact that when smooth surfaces approach another surface as varifolds with multiplicity one, the genus can only drop:

\begin{prop}\label{surgeries}  Let $B$ be a sufficiently small ball in a 3-manifold and $\Sigma\subset B$ with $\partial\Sigma\subset\partial B$ ($\partial\Sigma$ possibly disconnected) and with $g(\Sigma)=g$.  Consider a minimizing sequence $\Sigma_j$ for Problem($\Sigma,Is_j(\Sigma,B))$ converging to $\Delta$ (so by Proposition 3.2 of \cite{dp}, $\Delta$ is smooth, has multiplicity one, and $\partial\Delta = \partial\Sigma$).   Then $g(\Delta)\leq g$. 
\end{prop}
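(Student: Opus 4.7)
The plan is to use Meeks-Simon-Yau style bubbling analysis for minimizing sequences combined with the triviality of the local topology that follows from $g(\Sigma)=0$.

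By Proposition 3.2 of \cite{dp}, $\Delta$ is a smooth embedded minimal surface with multiplicity one and $\partial\Delta=\partial\Sigma$. Each $\Sigma_j$ is obtained from $\Sigma$ by an ambient isotopy supported in $B$, so $g(\Sigma_j)=g(\Sigma)=0$. Applying Lemma \ref{genuscollapse} to the sequence $\Sigma_j$ then produces no genus-concentration points: after passing to a subsequence, every $x\in B$ admits a radius $r(x)>0$ such that $\Sigma_j\cap B_x(r(x))$ is a union of planar domains for all $j$ large.

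Next I would argue that there is a \emph{finite} set $S=\{x_1,\ldots,x_m\}\subset\overline{B}$ away from which $\Sigma_j$ converges to $\Delta$ smoothly as a single normal graph. Since $\Sigma_j$ is a minimizing sequence for area in its class, $\mathcal{H}^2(\Sigma_j)\to\mathcal{H}^2(\Delta)$, and by multiplicity one any point $x\in\overline{B}\setminus S$ is either a point of $\Delta$ where $\Sigma_j$ is eventually a single normal graph (by stability of $\Delta$ together with the smoothness part of \cite{dp}), or a point outside $\Delta$ where the monotonicity formula combined with the isoperimetric inequality Lemma \ref{iso} forces $\Sigma_j$ to be empty in a small ball around $x$ for $j$ large. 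Every point of $S$ therefore carries either a definite density of the limit varifold or a definite quantum of ``vanishing area,'' so the mass convergence forces $S$ to be finite. Once $S$ has been extracted, pick $\epsilon>0$ so the balls $B_\epsilon(x_i)$ are pairwise disjoint and so that $\Sigma_j$ is a single smooth normal graph over $\Delta$ on $T_\epsilon(\Delta)\setminus\bigcup_i B_\epsilon(x_i)$ for $j$ large.

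Inside each $B_\epsilon(x_i)$, by the genus collapse step, $\Sigma_j\cap B_\epsilon(x_i)$ is a union of planar domains: some are thin annular necks connecting pieces of the main graph, and the remainder are small disjoint planar components (disks or spheres). A disk surgery at each such neck, followed by discarding every component not attached to the main graph (using the ``union of components'' clause in the definition of surgery), produces in finitely many steps a surface isotopic to $\Delta$. Since surgery never increases genus and $g(\Sigma_j)=0$, the resulting surface has genus $0$; since it is isotopic to $\Delta$, this forces $g(\Delta)=0$.

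The main obstacle is pinning down the finiteness of $S$ in a clean way: one must localize bubbling to finitely many points and rule out any ``essential'' pieces of $\Sigma_j$ that stay a bounded distance from $\Delta$. Convergence of mass plus multiplicity one handles the former, and the monotonicity formula combined with the isoperimetric inequality handles the latter. Once this structural convergence is in place, the surgery and the genus bookkeeping are routine.
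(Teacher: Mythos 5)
Your proposal has a genuine gap at its central structural claim. You assert that the minimizing sequence $\Sigma_j$ converges to $\Delta$ \emph{smoothly, as a single normal graph}, away from a finite set $S\subset\overline{B}$, and you deduce finiteness of $S$ from ``mass convergence plus multiplicity one.'' But the $\Sigma_j$ are not minimal surfaces -- they are arbitrary smooth isotopes of $\Sigma$ -- so there are no a priori curvature estimates that would upgrade varifold convergence with multiplicity one into smooth graphical convergence at a generic point. Lemma \ref{genuscollapse} only controls genus in small balls; it does not exclude necks or sheets that persist without pinching. Your finiteness argument is also vague where it matters: ``a definite density of the limit varifold'' does not distinguish the bad set (the density of $\Delta$ is $1$ everywhere, by multiplicity one), and ``a definite quantum of vanishing area'' is an unquantified appeal to concentration-compactness that is not supported by the ingredients you list. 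In short, you have asserted as input essentially the structural convergence that the proposition is meant to establish.

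Your genus argument is likewise circular: you write that surgery and discarding ``produces in finitely many steps a surface isotopic to $\Delta$,'' and then you conclude $g(\Delta)=0$ because surgery cannot raise genus. But the isotopy with $\Delta$ is precisely the thing to be proven, and nothing in your sketch establishes it. The paper's proof takes a different and self-contained route: it first proves a lifting lemma (Lemma \ref{lifting2}) for minimizing sequences by passing to stable replacements in disjoint balls, using the Squeezing Lemma to keep the constrained isotopy class, and then running the Meeks--Simon--Yau / De Lellis--Pellandini chain-of-balls argument with the isoperimetric inequality and monotonicity formula. With the lifting lemma in hand, $g(\Delta)=0$ follows by contradiction -- a nontrivial loop on a positive-genus component of $\Delta$ would lift to a loop on a planar $\hat\Sigma_j$ capped by disks, which is a sphere in $T_\epsilon(\Delta^1\cup D)$, forcing null-homotopy. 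The isotopy statement then comes from the general surgery-in-a-thin-neighborhood argument of \cite{dp}, applied using the established varifold convergence, and finally from the classification of planar domains with identical boundaries. To salvage your route you would need to actually prove the strong convergence claim, which would require the same replacement / Squeezing Lemma machinery the paper uses for the lifting lemma; there is no shortcut via soft mass accounting.
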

\begin{rmk}
The genus of a disconnected surface is defined to be the sum of the genera of each connected component.
\end{rmk}
\noindent
\emph{Proof of Proposition \ref{surgeries}:}
By successively capping off their boundary circles in $\partial B$ with disks we can extend $\Sigma_j$ and $\Delta$ to   $\tilde{\Sigma}_j$ and $\tilde{\Delta}$ respectively, so that $\tilde{\Sigma}_j$ and $\tilde{\Delta}$ are smooth closed surfaces contained in $T_{\epsilon}(B)$ and so that for some decreasing sequence $\epsilon_j\rightarrow 0$
\begin{enumerate}
\item $\tilde{\Delta}=\Delta$ in $B$
\item $\tilde{\Sigma}_j=\Sigma_j$ in $B$
\item $\tilde{\Sigma}_j=\tilde{\Delta}$ in $T_{\epsilon}(B)\setminus T_{\epsilon_j}(B)$.
\end{enumerate}

For $\delta>0$ small enough, the nearest point projection $p:T_\delta(\tilde{\Delta})\rightarrow \tilde{\Delta}$ is well-defined.

Using the varifold convergence of $\tilde{\Sigma}_j$ to $\Delta$, and the co-area formula, Proposition 2.3 in \cite{dp} applies verbatim (we give the full details for an analagous argument in the proof of Theorem \ref{main}) to show that one can perform finitely many neck-pinch surgeries supported within $B$ on each $\tilde{\Sigma}_j$ to arrive at smooth surfaces $\hat{\Sigma}_j$ such that 
\begin{enumerate}
\item $\hat{\Sigma}_j\subset T_{\delta}(\tilde{\Delta})$
\item $\hat{\Sigma}_j = \Sigma_j$ in $T_{\delta/2}(\tilde{\Delta})$
\end{enumerate}

Since $\hat{\Sigma}_j$ arises from $\tilde{\Sigma}_j$ after neck-pinch surgeries, it follows that $g(\hat{\Sigma}_j)\leq g(\tilde{\Sigma}_j)= g(\Sigma)$.  To see that $g(\Delta)\leq g$, suppose $g(\Delta)>g$ and let $\Delta^1$ be some component of $\tilde{\Delta}$ with $g(\Delta^1)> g(\hat{\Sigma}_j^1)$, where $\hat{\Sigma}_j^1$ denotes the component of $\hat{\Sigma}_j$ that has the same boundary as $\Delta^1$.  For $j$ large, the nearest-point projection map $p$ restricted to $\hat{\Sigma}_j^1$ induces a map $\tilde{p}$ from a surface of lower genus to one of higher genus.  But every $C^1$ map of a surface of lower genus to higher genus has degree $0$, and thus the degree of $\tilde{p}$ is $0$.   On the other hand, Since $\hat{\Sigma}_j^1$ and $\Delta^1$ coincide in an open set (i.e. in $T_{\epsilon}(B)\setminus T_{\epsilon_j}(B)$) it follows that the mod $2$ degree of $\tilde{p}$ is odd.  This is a contradiction. It follows that $g(\Delta)\leq g$.\qed.
\\

To complete Step 2, apply Proposition \ref{surgeries} with $\Sigma=\Sigma_j$ and with the minimizing sequence approaching $V_j$.  Since by item (6) in the Setup, $\Sigma_j$ has genus $0$ in each ball comprising $B_E$, it follows that the genus of $V_j$ is also zero and that $\Sigma_j$ and $V_j$ are isotopic.  From the proof of Proposition \ref{surgeries} we also obtain that $V_j$ is achieved from $\Sigma_j$ after surgeries. 

\subsection{Completion of Step 7:}\label{wh}
In this section we explain the necessary modifications of some classical results to obtain

\begin{lemma}\label{white}
There exists a finite set of points (potentially empty) $\mathcal{S}\subset R_{\theta_0/4,\phi_0}$ so that after passing to a subsequence (again not relabeled), the convergence $W_i\rightarrow n\Gamma$ in $R_{\theta_0/4,\phi_0}$ is smooth on compact subsets of $\Gamma\setminus\mathcal{S}$.
\end{lemma}

If the curve $\gamma$ happened to intersect $\mathcal{S}$, we can perturb it slightly to avoid this set, completing Step 7.

Lemma \ref{white} is an extension of the result of Choi-Schoen \cite{cs} \emph{up to the boundary} and follows from a theorem of Brian White (Theorem 3 in \cite{W}) which asserts the following (which we restate slightly in a form more convenient for us):
\begin{thm}\label{white2}
Let $M_i$ be a sequence of minimal surfaces with uniformly bounded areas contained in $N$ with boundaries converging in the Hausdorff topology to $\Lambda$.
Suppose further
\begin{equation}
\sup_j\int_{M_j}|A|^2 \leq C.
\end{equation}
Then there exists a finite set $S\subset N$ so that $M_i$ converges on compact subsets of $\Omega=N\setminus(\Lambda\cup S)$ to a minimal surface (potentially with multiplicity). Moreover, if a portion $G_i\subset\partial M_i$ has $||G_i ||_{C^{2,\alpha}}$ uniformly bounded, and $G_i\rightarrow G\subset\Lambda$, and $B_i:=\partial M_i\setminus G_i$ satisfies $B_i\rightarrow B\subset\Lambda$, then we can let $\Omega = N\setminus (B\cup S)$.
\end{thm}

Set $U=R_{\theta_0/2,\phi_0}$, and $M_i=W_i$ in Theorem \ref{white2}.  By Remark \ref{cinf}, the $C^{2,\alpha}$ norm of the curves $\alpha_i$ and $\beta_i$ making up the intersection of $W_i$ with the pieces of $\partial R_{\theta_0/2,\phi}$ at radius $r$ and $(1-\delta) r$ are all uniformly bounded.  Thus we can set $G_i=\cup_{j=1}^k(\alpha_j\cup\beta_j)$.  Thus applying Theorem \ref{white2} we obtain Lemma \ref{white}.  It follows that the surface $W_i$ extends as $n$ graphical sheets above $\Gamma$,  each of which connects $\alpha_i$ for some $i$ to the corresponding $\beta_i$.

\subsection{Completion of Step 9: Lifting multiple curves}\label{multiple}
Let us explain finally the modifications necessary for lifting multiple curves intersecting at a point $p$.  After perturbing the curves slightly we can ensure that they are disjoint from $\mathcal{B}$.  As in the statement of Proposition \ref{improvedlifting}, we can suppose all $\gamma_i$ intersect transversally at $p\in\Gamma$ and have no other point of pairwise intersection.  Using the argument in the Setup, for $\gamma_1$, a small number $\rho_1$ can be chosen and ``small balls" of radius $3\rho_1/4$ as well as ``large balls" of radius $15\rho_1/8$ along the curve $\gamma_1$.  The small balls are either even or odd.   We can further choose the points and labeling so that $B_1$ (as defined in the Setup) is the ball of radius $3\rho_1/4$ based at $p$.  Items (1)-(6) in the Setup then hold for these balls associated to $\gamma_1$.

The potential difficulty if we try to do this for \emph{each} curve $\gamma_i$ is that the small and large balls associated to each $\gamma_i$ could intersect in a complicated way near $p$ and it would then be more involved to run the alternating scheme that we employed for one curve.  The trick to avoid this difficulty is to choose $\rho_i$ for $i=\{2,..,k\}$ to be much smaller than $\rho_1$ and to be slightly careful about where some of the even and odd balls associated to these curves are placed.  Let us give more details.

By choosing $\rho_1$ small enough, we can arrange that each of the curves $\{\gamma_i\}_{i=2}^k$ intersects $\partial B_1=\partial B_{3\rho_{1}/4}(p)$ in two distinct points, $a_i$ and $b_i$, where $a_i\neq a_j$ and $b_i\neq b_j$ for $i\neq j$.  Likewise, each $\{\gamma_i\}_{i=2}^k$ intersects $\partial B_{15\rho_1/8}(p)$ in two distinct points, $c_i$ and $d_i$ with $c_i\neq c_j$ and $d_i\neq d_j$ for $i\neq j$.  Moreoever, by choosing $\rho_1$ small enough, for each $i\in\{2,..,k\}$ we can ensure
\begin{enumerate}[label=(\alph*)]
\item $\gamma_i$ is disjoint from all smaller balls associated to $\gamma_1$ aside from $B_1(=B_{3\rho_1/4}(p))$.
\item $\gamma_i$ is disjoint from all larger balls associated to $\gamma_1$ except for $B_{15\rho_1/8}(p)$.
\end{enumerate}

Let us assume such a $\rho_1$ has been fixed.  For each $i=\{2,...,k\}$ as in the Setup we associate ``small" and ``large" balls of radius $(3/4)\rho_i$ and $(15/8)\rho_i$ respectively, where we will choose $\rho_i$ to be sufficiently small.  These balls satisfy items (1)-(6) in the Setup.  By choosing $\rho_i$ small enough we can ensure that statements (a) and (b) hold where the symbol ``$\gamma_i$" replaced by the ``union of the larger balls associated to $\gamma_i$".  Let us refer to these amended statements as (a\textprime) and (b\textprime).  By choosing $\rho_i$ small enough we can also ensure (c\textprime): for $i, j\in\{2,...k\}$, the large balls associated to $\gamma_i$ are disjoint from the large balls associated to $\gamma_j$ when $i\neq j$.

Moreover, we choose $\rho_i$ and the small balls for each $\gamma_i$ so that for each $i=\{2,...k\}$
\begin{enumerate}
\item the two small balls associated to $\gamma_i$ that contain $a_i$ and $b_i$ are even
\item the points of intersection of $\partial B_{15\rho_1/8}(p)$ with $\gamma_i$ (i.e.\ $c_i$ and $d_i$) are contained in one of the odd small balls associated to $\gamma_i$. 
\end{enumerate}

For each $i=\{2,...k\}$, we \emph{discard} those small balls along $\gamma_i$ that are contained entirely in $B_1$.  In this way, for each $i$, the remaining small balls associated to $\gamma_i$ together with $B_1$ give a covering of $\gamma_i$.

As in the case of one curve, first perform the $1/j$-replacement $V_j$ in all even small balls for all $\gamma_i$ ($i\in\{1,2,...k\}$) simultaneously, and then in all such odd small balls.  By (b\textprime) and (c\textprime), the only thing we have to check is that this is a coherent operation for all large balls that intersect the larger ball $B_{15\rho_1/8}(p)$ associated to $\gamma_1$.

By (2), all the small even balls intersecting $B_{15\rho_1/8}(p)$ are contained entirely inside $B_{15\rho_1/8}(p)$.   Moreover, by (a\textprime) and (c\textprime), all such small even balls intersecting $B_{15\rho_1/8}(p)$ are pairwise disjoint so that when we take the $1/j$ replacement in each such ball, by applying Lemma \ref{stillam2} the resulting sequence $V_j$ is still $1/j$-almost minimizing in $B_{15\rho_1/8}(p)$, and therefore in each of the smaller odd balls associated to any of the curves $\gamma_i$ for $i\in\{1,...,k\}$ that are contained entirely in $B_{15\rho_1/8}(p)$.  

We can thus take the $1/j$-replacement as before in all small odd balls associated to each $\gamma_i$ (for all $i\in\{1,2,...,k\}$).   There are finally the small odd balls as in (2) that are intersecting, but not contained entirely in $B_{15\rho_1/8}(p)$ (containing $c_i$ and $d_i$).  These balls however are contained in some larger ball associated to a curve $\gamma_i$ for $i\in\{2,...,k\}$, which by (b\textprime) and (c\textprime) is disjoint from the other large balls associated to the other curves.   The sequence $V_j$ is thus still $1/j$-almost minimizing in these odd balls by item (4) from the Setup.

In this way we can then apply the argument in the case of one curve to show that we can lift each $\gamma_i$ on $n$ graphical levels past the overlapping regions of the small even and odd balls.
\section{Proof of Theorem \ref{main}}\label{last}
In this section, following Frohman-Hass \cite{fh}, we use the Improved Lifting Lemma (Proposition \ref{improvedlifting}) to prove Theorem \ref{main}.

Let us first recall some notation.  For $\epsilon>0$ small enough consider the tubular neighborhood $T_\epsilon(\Gamma)$ about a closed surface $\Gamma$.   If $\Gamma$ is orientable, then $T_\epsilon(\Gamma)$ is diffeomorphic to $\Gamma\times[-\epsilon,\epsilon]$ and $\partial T_\epsilon$ is diffeomorphic to two copies of $\Gamma$. If $\Gamma$ is non-orientable, $T_\epsilon(\Gamma)$ is diffeomorphic to a twisted interval bundle over $\Gamma$ and $\partial T_\epsilon(\Gamma)$ is a double cover of $\Gamma$.

To prove Theorem \ref{main} we will exploit the following basic topological fact about surfaces (see Section 1.5 in \cite{m}).  It will allow us to reduce the global question of the how min-max sequences converge to studying the min-max sequence in the neighborhood of finitely many appropriately chosen curves on the limit:
\begin{lemma}\label{top}
If $\Sigma$ is a closed surface (orientable or non-orientable) not homeomorphic to the two-sphere  and $p\in\Sigma$, then there exist simple closed curves $\gamma_1,...\gamma_k$ so that $\Sigma\setminus\{\gamma_1,...\gamma_k\}$ is homeomorphic to an open disk.  Moreoever $\gamma_i\cap\gamma_j=p$ for all distinct $i$ and $j$. If $\Sigma$ has genus $g$ and is orientable, then $k=2g$.  If $\Sigma$ has genus $g$ and is non-orientable, then $k=g$.
\end{lemma}
\begin{proof}
An orientable surface of genus $g$ that is not equal to $0$ can be represented as a regular $4g$-gon with edges identified.  After identifications each such edge becomes a closed curve, each of which intersects every other edge in only one point.  Similarly, a non-orientable surface of genus $g$ can be represented as a $2g$-gon with appropriate identifications (cf.\ Section 1.5 \cite{m}).
\end{proof}
We will also need the following lemma:
\begin{lemma}[Lemma C.1 in \cite{dp}]\label{fur}
Suppose $\Sigma\subset U$ is a surface with $\partial\Sigma\subset\partial U$, and so that $\partial\Sigma$ intersects $\partial U$ transversally in a set of circles.  Then there exists a sequence of isotopies $\phi_l$ supported in $U$ so that $\phi_l(1,\Sigma)\rightarrow\tilde{\Sigma}$, where $\tilde{\Sigma}$ arises topologically from $\Sigma$ via finitely many neck-pinches on $\Sigma$, $\partial\Sigma=\partial\tilde{\Sigma}$ and $\tilde{\Sigma}$ consists of a union of disks.
\end{lemma}
\noindent
\emph{Proof of Theorem \ref{main}.}  
Let $\Gamma$ denote the support of the min-max limit $\sum_{i=1}^kn_i\Gamma_i$.  We can assume without loss of generality that $\Gamma$ is connected (otherwise we repeat the following argument for each connected component).  Furthermore assume $\Gamma$ is not homeomorphic to the two sphere (we will indicate the changes in this case at the end of the proof).  Suppose $\Gamma$ occurs as the min-max limit with multiplicity $n$.

Fix $\epsilon_1$ smaller than the $\epsilon_0$ provided by the Improved Lifting Lemma (Proposition \ref{improvedlifting}) and let $\epsilon< \epsilon_1$.  Consider the projection map $p: T_{\epsilon}(\Gamma)\rightarrow\Gamma$.  Fix a set of curves $\left\{\alpha_l\right\}_{l=1}^k\subset\Gamma$ provided by Lemma \ref{top} so that $\Gamma_i\setminus\cup_{l=1}^k\alpha_l$ is a disk.   By our Improved Lifting Lemma (Proposition \ref{improvedlifting}) we can perturb the curves $\alpha_l$ slightly, pass to a subsequence (not relabeled) and perform finitely many surgeries on $\Sigma_j$ to obtain a new sequence $\tilde{\Sigma}_j$ (with $\tilde{\Sigma}_j\rightarrow n\Gamma$) so that if $\Gamma$ is orientable, for each $l$ there are $n$ curves $\{\tilde{\alpha}_{ls}\}_{s=1}^{n}$ in $\tilde{\Sigma}_j\cap T_\epsilon(\Gamma)$ that for each $s$, $\alpha_{ls}$ projects via $p$ with degree $1$  onto $\alpha_l$.  If $\Gamma$ is non-orientable, then we instead obtain $n/2$ curves $\{\tilde{\alpha}_{ls}\}_{s=1}^{n/2}$ in $\tilde{\Sigma}_j\cap T_\epsilon(\Gamma)$ with the property that for each $s$, $\alpha_{ls}$ projects via $p$ with degree $2$ onto $\alpha_l$.
Moreoever, for each $l$, 
\begin{equation}\label{else}
p^{-1}(\alpha_{l})\cap T_\epsilon(\Gamma)\cap\tilde{\Sigma}_j=\cup_{s=1}^{n^*}\alpha_{ls},
\end{equation}
\noindent
where $n^*$ denotes $n$ if $\Gamma$ is orientable, and $n/2$ if $\Gamma$ is non-orientable.

We will now perform surgeries on $\tilde{\Sigma}_j$ so that the resulting sequence is contained in $T_{\epsilon}(\Gamma)$ (cf.\ Proposition 2.3 \cite{dp}).

Set $\Lambda = T_{\epsilon}(\Gamma)\setminus T_{\epsilon/2}(\Gamma)$.  Since $\tilde{\Sigma}_j\rightarrow n\Gamma$, for any $\eta > 0$ and for $j$ large enough we obtain that 
\begin{equation}
\mathcal{H}^2(\tilde{\Sigma}_j\cap\Lambda)\leq\eta.
\end{equation}
For $\sigma\in[\epsilon/2,\epsilon]$ set $\Lambda_\sigma = \partial (T_{\sigma}(\Gamma))$.  By the coarea formula, 
\begin{equation}
\int_{\epsilon/2}^{\epsilon}\mathcal{H}^1(\tilde{\Sigma}_j\cap\Lambda_\sigma)d\sigma\leq\mathcal{H}^2(\tilde{\Sigma}_j\cap\Lambda)\leq\eta.
\end{equation}
Therefore by Chebyshev's inequality, for a set of $\sigma$'s of measure at least $\epsilon/2$ we obtain for each $j$
\begin{equation}\label{smalllength}
\mathcal{H}^1(\tilde{\Sigma}_j\cap\Lambda_\sigma)\leq\frac{2\eta}{\epsilon}.
\end{equation} 

By Sard's lemma we can then choose $\sigma\in[\epsilon/2,\epsilon]$ so that all $\tilde{\Sigma}_j$ intersect $\partial (T_\sigma(\Gamma))$ transversally and \eqref{smalllength} holds.  By \eqref{smalllength}, if we choose $\eta$ appropriately small, $\tilde{\Sigma}_j\cap\partial (T_\sigma(\Gamma))$ consists of small circles bounding disks in $\Lambda_\sigma$ whose diameters approach $0$ as $\eta\rightarrow 0$.  Thus for $\delta>0$ suitably small, $\tilde{\Sigma}$ intersects $T_{\epsilon+\delta}(\Gamma)\setminus T_{\epsilon-\delta}(\Gamma)$ in small cylinders whose boundary circles lie in $\Lambda_{\sigma+\delta}$ and $\Lambda_{\sigma-\delta}$.  We surger along these circles (starting with the innermost) by gluing in the appropriate small disk in $\Lambda_{\sigma+\delta}$ and $\Lambda_{\sigma-\delta}$ and removing the cylinder between them on $\Sigma_j$.   We then discard any connected component of the surgered sequence that is not contained in $T_\epsilon(\Gamma)$. The new surgered sequence (which we do not relabel) $\tilde{\Sigma}_j$ is then contained in $T_\epsilon(\Gamma)$.  

Finally we can cut open $T_\epsilon(\Gamma)$ along the surfaces $p^{-1}(\alpha_l)\cap T_\epsilon(\Gamma)$.  Namely, we can consider $N :=T_\epsilon(\Gamma)\setminus p^{-1}(\cup_{l=1}^k\alpha_l)$ which is a $3$-manifold with boundary that is homeomorphic to a $3$-ball because $\Gamma\setminus\cup_{l=1}^k\alpha_l$ is a disk.  Using the decomposition provided by Lemma \ref{top}, we can express $N$ as $P\times [-\epsilon,\epsilon]$ where $P$ is a regular polygon and  $T_\epsilon(\Gamma)$ is obtained from $N$ by appropriate boundary identifications of the faces of $\partial P\times [-\epsilon,\epsilon]$.

By \eqref{else} and the discussion preceding it, we obtain that $\tilde{\Sigma}_j\setminus p^{-1}(\cup_{l=1}^k\alpha_l)$ is contained in $N$ and has boundary in $N$ consisting precisely of $n$ closed parallel curves in $\partial P\times [-\epsilon,\epsilon]$. 

Applying Lemma \ref{fur} with $U=N$, and $\Sigma=\tilde{\Sigma}_j$ we can surger $\tilde{\Sigma}_j$ yet again in $N$ to consist of $n$ connected disks $\{D_i\}_{i=1}^n$ each of whose boundary is a curve in $\partial N$ isotopic in $\partial P\times [-\epsilon,\epsilon]$ to $\partial P\times\{0\}$.  If $\Gamma$ is orientable, each such disk is isotopic to $\Gamma$ after making the boundary identifications of $\partial N$.  Thus after further isotopy in $N$ we obtain the desired decomposition (item (2) in the statement of Theorem \ref{main}).  If $\Gamma$ is non-orientable, then after making the boundary identifications of $N$, for each $i$, $D_i$ and $D_{n-i}$ pair off to give a double cover of $\Gamma$.  Thus similarly we obtain (2).

Finally, in the case that $\Gamma$ is homeomorphic to a two-sphere, choose any simple closed curve $\gamma\subset\Gamma$ and repeat the above argument.   The only difference occurs in the final three paragraphs: the set $\Gamma\setminus\gamma$ has two connected components in $\Gamma$, not one.  Thus $T_\epsilon(\Gamma)\setminus p^{-1}(\gamma)$ consists of two $3$-balls, $N_1$ and $N_2$, and one applies the preceding three paragraphs to $N_1$ and $N_2$ separately.
\qed

%\begin{figure}[htb]
%\begin{center}
%\includegraphics[height=2in,width=2in,angle=0]{bad.jpg}
%\caption{An illustration of the proof of Theorem \ref{main}.  Here we unwrap a tubular neighborhood around a limiting %torus. $\tilde{\Sigma}_j$ is the union of the three "planes"  possibly with necks connecting them, which we can cut %along to arrive at a union of three surfaces each isotopic to the limiting torus.  The boundary curves in this diagram are %the lifted curves identified by the arrows produced by the lifting lemma}
%\end{center}
%\end{figure}

\noindent

\section{Appendix A}
Let $\gamma(s)$ be a smooth closed curve in $\mathbb{R}^3$ parameterized by arclength.  Then we define the curvature $|k_{g}|$ of $\gamma(s)$ to be $|\ddot{\gamma}(s)|$.  According to Fenchel's theorem (\cite{f}, \cite{mr}), 
\begin{equation}\label{fen}
\int_{\gamma(s)} |k_g|ds\geq 2\pi.
\end{equation}

We need a version of \eqref{fen} for curves contained in small balls in a Riemannian manifold.  For a point $x$ in a Riemannian manifold $M$, consider normal exponential coordinates about $x$ for some radius $r_0$ smaller than the injectivity radius of $M$. Consider spherical coordinates $(\rho,\theta,\phi)$ for $T_xM$ and for each $\theta_0\in[0,2\pi]$ and $r\leq r_0$ denote the planar region 
\begin{equation}
P_{\theta_0,r} =\exp_x(\left\{\theta = \theta_0\right\}\cap B(r)),
\end{equation}
where $B(r)$ is the ball of radius $r$ centered around the origin in $T_xM$.
We have the following
\begin{lemma}
 Let $M$ be a Riemannian $3$-manifold. Given $x\in M$ there exists a radius $r(x)>0$ such that for any closed smooth curve $\gamma$ contained for some $\theta_0$ in $P_{\theta_0,r(x)}$ there holds
\begin{equation}
\int_{\gamma} |k_M|ds\geq\pi.
\end{equation}
\end{lemma}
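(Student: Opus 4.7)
The plan is to reduce the claim to Gauss--Bonnet on the surface $\Sigma := \exp_x(P_{\theta_0, r})$, which for $r \leq r_0(x)$ sufficiently small is a smoothly embedded topological half-disk in $M$. In the application in Step 3 of the No Folding Proposition, the curve $\gamma$ is a connected component of a transverse intersection $W_j \cap K_t$, and so is simple; I will treat this case in detail and indicate the extension to general immersed closed curves at the end. For simple $\gamma$, the curve bounds a topological subdisk $D \subset \Sigma$.

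The key geometric input is that $\Sigma$ is nearly flat in a quantitative way. In normal coordinates at $x$ the ambient metric satisfies $g_{ij} = \delta_{ij} + O(|y|^2)$ and the Christoffel symbols are $O(|y|)$, so by the Gauss equation the intrinsic Gauss curvature $K_\Sigma$ of the induced metric on $\Sigma$ is uniformly bounded in terms of $\sup_M |\sec_M|$ and $\sup_\Sigma |A_\Sigma|$, and $\mathrm{Area}(\Sigma) \leq C r^2$ for a constant $C = C(x)$. Applying Gauss--Bonnet on $D$,
\[
\int_D K_\Sigma \, dA + \int_\gamma k_g^\Sigma \, ds = 2\pi,
\]
where $k_g^\Sigma$ denotes the intrinsic geodesic curvature of $\gamma$ on $\Sigma$. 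The estimates above yield $\bigl|\int_D K_\Sigma \, dA\bigr| \leq C' r^2$, so choosing $r(x)$ small enough that $C' r(x)^2 \leq \pi$ gives
\[
\int_\gamma |k_g^\Sigma| \, ds \;\geq\; \Bigl|\int_\gamma k_g^\Sigma \, ds\Bigr| \;\geq\; 2\pi - C' r^2 \;\geq\; \pi.
\]

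To finish, I pass from the intrinsic geodesic curvature to the full ambient curvature. Decomposing the ambient acceleration of $\gamma$ into a component tangent to $\Sigma$ (of magnitude $k_g^\Sigma$) and one normal to $\Sigma$ (of magnitude $|A_\Sigma(\dot\gamma, \dot\gamma)|$), the Pythagorean identity gives $|k_M|^2 = (k_g^\Sigma)^2 + |A_\Sigma(\dot\gamma, \dot\gamma)|^2$, so $|k_M| \geq |k_g^\Sigma|$ pointwise; integrating yields $\int_\gamma |k_M| \, ds \geq \pi$, as required.

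The main obstacle, if one insists on full generality for ``closed smooth curve,'' is the non-simple case, where Gauss--Bonnet does not apply directly. The cleanest way to handle it is to pull $\gamma$ back to the tangent space via $\exp_x^{-1}$, obtaining an immersed closed curve $\tilde\gamma$ lying in the flat half-disk $P_{\theta_0, r} \subset T_x M$. The classical Fenchel inequality in $\mathbb{R}^3$ gives $\int_{\tilde\gamma} |\kappa_E| \, ds_E \geq 2\pi$, and since in normal coordinates the Riemannian and Euclidean curvatures (resp.\ arclength elements) differ by $O(r)$ (resp.\ $O(r^2)$), the bound $\int_\gamma |k_M| \, ds \geq \pi$ persists once $r$ is small enough to absorb these errors.
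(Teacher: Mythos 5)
Your proof is correct and follows essentially the same route as the paper's: apply Gauss--Bonnet on the surface $\exp_x(P_{\theta_0,r})$, make $\int K$ small by shrinking $r$ (using compactness in $\theta_0$), and then pass from intrinsic geodesic curvature to ambient curvature via $|k_g| \leq |k_M|$. You supply more detail than the paper at two points -- the Pythagorean decomposition of the acceleration justifying $|k_M| \geq |k_g^\Sigma|$, and the reduction to Fenchel for the non-simple case (which the paper's phrase ``bounding a region'' quietly assumes away) -- but the core argument is identical.
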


\begin{proof}  
Let $k$ be an upper bound for the sectional curvature of $M$.
Fix $r\leq r_0$.   For each $\theta_0\in[0,2\pi]$ consider the surface 
$P_{\theta_0,r}$.  The Gaussian curvature of this surface at the point $x$ is the sectional curvature of $M$ in the plane through $x$ determined by $\theta = \theta_0$ in $T_xM$.  Thus the Gaussian curvature of $P_{\theta_0,r}$ is at most $2k$ if $r$ is chosen small enough.  Since $[0,2\pi]$ is compact, we can choose $r$ small enough so that for each $\theta\in[0,2\pi]$, the Gaussian curvature of $P_{\theta,r}$ is at most $2k$.   

Consider a closed curve $\gamma$ in $P_{\theta_0,r}$ bounding a region $\Gamma$.  By the Gauss-Bonnet formula,
\begin{equation}\label{ax}
2\pi = \int_\Gamma K_\Gamma  d\mathcal{H}+\int_{\gamma}k_g ds.
\end{equation}
But 
\begin{equation}\label{ax1}
|\int_\Gamma K_\Gamma d\mathcal{H}|\leq 2k\mathcal{H}^2(P_{\theta_0,r})
\end{equation}
and 
\begin{equation}\label{ax2}
|\int_\gamma k_g ds|\leq \int_\gamma |k_M|ds,
\end{equation}
where $k_M$ is the ambient curvature of the curve $\gamma$.  By decreasing $r$ if necessary we can guarantee that \begin{equation}\label{ax3}
\mathcal{H}^2(P_{\theta_0,r})\leq\frac{\pi}{2k}\mbox{               for all  } \theta_0\in[0,2\pi].
\end{equation}
Combining \eqref{ax}, \eqref{ax1}, \eqref{ax2} and \eqref{ax3} we obtain
 \begin{equation}
\int_\gamma |k_M|ds\geq\pi.
\end{equation}
\end{proof}


\begin{thebibliography}{CaGo}
\bibitem[A]{a}
F.J. Almgren Jr. \emph{The theory of varifolds}. Mimeographed notes, Princeton University, 1965.
%\bibitem[AS]{as}
%F. J. Almgren Jr. and  L. Simon. \emph{Existence of embedded solutions of Plateau's problem}. Ann.
%Scuola Norm. Sup. Pisa Cl. Sci. (4), 6(3):447--495, 1979.
\bibitem[B]{B}
G.D. Birkhoff. \emph{Dynamical systems with two degrees of freedom,} Trans. Amer. Math. Soc. 18 (1917) 199-300.
%\bibitem{bachman}
%D. Bachman. \emph{Topological Index Theory for surfaces in 3-manifolds.} Geom. Topol., 14(1):585--609, 2010.
%\bibitem{bachman2}
%D. Bachman. \emph{Critical Heegaard surfaces}. Trans. Amer. Math. Soc., 354(10):4015--4042, 2002.
%\bibitem{brendle}
%S. Brendle.  \emph{The Lawson Conjecture}. preprint
\bibitem[CG]{CG}
A.J. Casson and C.M. Gordon. \emph{Reducing Heegaard splittings}. Topology Appl. 27 (1987)
275--283.
\bibitem[CS]{cs}
H.I. Choi and R. Schoen. \emph{The space of minimal embeddings of a surface into a 3-manifold with positive Ricci curvature}. Invent. Math., vol. 81, (1985) 357--394.
\bibitem[CD]{cd} T.H. Colding, C. De Lellis. \emph{The min-max construction of minimal surfaces}. Surv. Differ. Geom., VIII; p. 75--107. Int.
Press, Somerville, MA, 2003.
\bibitem[CGK]{CGK}
T.H. Colding, D. Gabai and D. Ketover. \emph{On the classification of Heegaard splittings}, arxiv.org/abs/1509.05945.
%\bibitem{cm}
%T.H. Colding and W. Minicozzi II. \emph{Estimates for the extinction time for the Ricci ﬂow
%on certain 3-manifolds and a question of Perelman}. J. Amer. Math. Soc. 18 (2005),
%no. 3, 561--569.
\bibitem[CM]{cmestimates}
T.H. Colding and W. Minicozzi II. \emph{Estimates for parametric elliptic integrands}.  Int Math Res Notices vol. 6 (2002) 291--297.
\bibitem[CM2]{cm3}
T.H. Colding and W. Minicozzi II. \emph{A Course in Minimal Surfaces}. GSM 121, American Mathematical Society, Providence Rhode Island, 2011.
\bibitem[CM3]{cm2}
T.H. Colding and W. Minicozzi II. \emph{The space of embedded minimal surfaces of fixed genus in 3-manifold, III: Planar Domains}. Ann. of Math. (2) 160 (2004) no. 2. 523--572.
\bibitem[DP]{dp}
C. De Lellis and F. Pellandini. \emph{Genus Bounds for Minimal Surfaces Arising from the Min-Max Construction}.  J. Reine Angew. Math 644 (2010), 47-99.
%\bibitem{dt}
%C. De Lellis and D. Tasnady. \emph{The existence of embedded minimal hypersurfaces}, JDG 00(2010) 1--34.
\bibitem[F]{f}
W. Fenchel. \emph{\"Uber die Krummung und Windung geschlossener Raumkurven,} Math. Ann. 101 (1929) 238-252.
%\bibitem[FL]{fl}
%A. Fraser and M. Li. \emph{Compactness of the space of embedded minimal surfaces with free boundary in three-%manifolds with nonnegative Ricci curvature and convex boundary,} J. Diff. Geom. vol. 96, no. 2 (2014), 183-200.
\bibitem[FH]{fh}
C. Frohman and J. Hass. \emph{Unstable minimal surfaces and Heegaard splittings}, Invent. Math., vol 95 no 3 (1989) 529--540.
%\bibitem{gj}
%M. Grueter and J. Jost.\emph {On embedded minimal disks in convex bodies}. Ann. Inst. H. Poincar´e Anal.
%Non Lineaire, 3(5):345--390, 1986.
\bibitem[I]{i}
T. Ilmanen. \emph{Lectures on Mean Curvature Flow and Related Equations} in Conference on Partial Differential Equations and Applications to Geometry, 21 August - 1 September, 1995, ICTP, Trieste.
\bibitem[G]{g}
M. Grayson. \emph{Shortening embedded curves},  Ann. Math. 120 (1989) 71-112.
\bibitem[K]{K}
D. Ketover. \emph{Equivariant min-max theory}, preprint.
\bibitem[KMN]{KMN}
D. Ketover, F.C. Marques and A. Neves. \emph{Catenoid estimate and its geometric applications}, arxiv.org/abs/1601.04514.
%\bibitem{jost}
%J. Jost. \emph{Two-dimensional geometric variational problems}. J. Wiley and Sons, Chichester NY (1991)

%\bibitem[K]{kapouleas}
%N. Kapouleas. \emph{Complete embedded minimal surfaces of finite total curvature}. J. Diff. Geometry, 47 (1997)95--169
%\bibitem[L]{taoli}
%T. Li. \emph{Heegaard surfaces and measured laminations: the Waldhausen conjecture}. Invent. Math. 167 (2007) %134-177.

%\bibitem{nm2}
% F.C. Marques and A. Neves. \emph{Rigidity of min-max minimal surfaces in 3-spheres}. preprint.
\bibitem[MN]{mn}
F.C. Marques and A. Neves. \emph{Min-max theory and the Willmore Conjecture}, Ann. of Math. vol 179 no 2 (2014) 683--782.
\bibitem[MN2]{mn2}
F.C. Marques and A. Neves. \emph{Morse index and multiplicity of min-max minimal hypersurfaces}, arxiv.org/abs/1512.06460.
\bibitem[M]{m}
W. Massey. \emph{Algebraic topology: an introduction}, Graduate Texts in Mathematics vol. 56, Springer NY, 1997.
\bibitem[MSY]{msy}
W. Meeks, III, L. Simon, and S.T. Yau. \emph{Embedded minimal surfaces, exotic spheres, and
manifolds with positive Ricci curvature}. Ann. of Math. (2) 116 (1982) 621–-659.
\bibitem[MR]{mr}
S. Montel and A. Ros. \emph{Curves and Surfaces} Graduate Studies in Mathematics vol 69, AMS 2009.
%\bibitem{parker}
%T. Parker. \emph{Bubble Tree Convergence for harmonic maps}. JDG, 44 (1996) 595--633.
 \bibitem[P]{p}
J.T. Pitts. \emph{Existence and regularity of minimal surfaces on Riemannian manifolds}.
Mathematical Notes, vol 27. Princeton University Press, Princeton, N.J., 1981.
\bibitem[PR]{pr}
J.T. Pitts and J.H. Rubinstein. \emph{Existence of minimal surfaces of bounded topological type in threemanifolds},
In Miniconference on Geometry and Partial Differential Equations (Canberra, 1985), vol. 10
of Proc. Centre Math. Anal. Austral. Nat. Univ; 163--176. Austral. Nat. Univ., Canberra, 1986.
\bibitem[PR2]{pr2}
J.T. Pitts and J.H. Rubinstein. \emph{Applications of minimax to minimal surfaces and the topology of
3-manifolds}. In Miniconference on Geometry and Partial Differential Equations, 2 (Canberra, 1986), vol.
12 of Proc. Centre Math. Anal. Austral. Nat. Univ., p. 137--170. Austral. Nat. Univ., Canberra,
1987.
\bibitem[R]{r}
J.H. Rubinstein. \emph{Minimal Surfaces in Geometric 3-manifolds}. preprint, 2004, ms.unimelb.edu.au/\textasciitilde rubin/publications/minimalsurfacenotes8.pdf.
%\bibitem[SU]{su} 
%J. Sacks and K. Uhlenbeck. \emph{The existence of minimal immersions of 2-spheres}. Ann. of Math. 2-113 , 1981 no. %1:1--24.
\bibitem[S]{schoen}
   R. Schoen.
   \emph{Estimates for stable minimal surfaces in three-dimensional manifolds}. In Seminar on
Minimal Submanifolds, vol 103 of Ann. of Math. Stud. p. 111--126, Princeton University Press, Princeton, NJ 1983.
\bibitem[Sc]{Sc}
H.A. Schwarz, \emph{\"Uber einen Grenz\"ubergang durch alternierendes Verfahren} Vierteljahrsschrift der Naturforschenden Gesellschaft in Z\"urich, vol 15 (1870), 272–-286.


%\bibitem[SY]{schoenyau}
%R. Schoen and S.T. Yau. \emph{On the proof of the positive mass conjecture in general relativiity}. Comm. Math Phys. %65 (1979), no.1., 45--76.
%\bibitem{sy}
%Y.-T. Siu and S.T. Yau. \emph{Compact Kaehler manifolds and positive bisectional curvature}. Invent. Math 59 %(1980)189-201.
\bibitem[SS]{ss}
F. Smith. \emph{On the existence of embedded minimal two spheres in the three sphere, endowed with an arbitrary
riemannian metric}. PhD thesis, Supervisor: Leon Simon, University of Melbourne, 1982.
%\bibitem{souto}
%J. Souto.  \emph{Geometry, Heegaard splittings and the rank of the fundamental group of hyperbolic 3-manifolds}. %Geometry & Topology Monographs 12 (2007) 351--399.
%\bibitem{stocking}
%Michelle Stocking. \emph{Almost normal  surfaces in 3–manifolds}
%. Trans. Amer. Math. Soc. 352  171–207 (2000)
%\bibitem{zhou}
%X. Zhou. \emph{Min-max minimal hypersurface in ($M^{n+1}$ , g) with $Ric g > 0$} and $2\leqnleq6$, preprint
\bibitem[W]{W}
B. White. \emph{Curvature estimates and compactness theorems in $3$-manifolds for surfaces that are stationary for parametric elliptic functionals,} Invent. Math. vol 88 (1987), no. 2., 243-256.







\end{thebibliography}
\end{document}